\title{Curve packing and modulus estimates}
\author{Katrin F\"assler and Tuomas Orponen}
\address{University of Jyv\"askyl\"a, Department of Mathematics and Statistics}
\address{University of Helsinki, Department of Mathematics and Statistics}
\thanks{K.F. is supported by the Academy of Finland through the grant \emph{Sub-Riemannian manifolds from a quasiconformal viewpoint}, grant number 285159. T.O. is supported by the Academy of Finland through the grant \emph{Restricted families of projections and connections to Kakeya type problems}, grant number $274512$. T.O. is also a member of the Finnish CoE in Analysis and Dynamics Research.}
\email{katrin.s.fassler@jyu.fi}
\email{tuomas.orponen@helsinki.fi}
\subjclass[2010]{28A75 (Primary) 31A15, 60CXX (Secondary)}
\newcommand{\R}{\mathbb{R}}
\newcommand{\N}{\mathbb{N}}
\newcommand{\tn}{\mathbb{P}}
\newcommand{\calT}{\mathcal{T}}
\newcommand{\calD}{\mathcal{D}}
\newcommand{\calH}{\mathcal{H}}
\newcommand{\calE}{\mathcal{E}}
\newcommand{\calN}{\mathcal{N}}
\newcommand{\calS}{\mathcal{S}}
\newcommand{\E}{\mathbb{E}}
\newcommand{\diam}{\operatorname{diam}}
\newcommand{\card}{\operatorname{card}}
\numberwithin{equation}{section}
\theoremstyle{plain}
\newtheorem{thm}[equation]{Theorem}
\newtheorem{lemma}[equation]{Lemma}
\newtheorem{cor}[equation]{Corollary}
\newtheorem{proposition}[equation]{Proposition}
\theoremstyle{definition}
\theoremstyle{remark}
\newtheorem{remark}[equation]{Remark}
\begin{document}

\begin{abstract} A family of planar curves is called a \emph{Moser family} if it contains an isometric copy of every rectifiable curve in $\R^{2}$ of length one. The classical "worm problem" of L. Moser from 1966 asks for the least area covered by the curves in any Moser family. In 1979, J. M. Marstrand proved that the answer is not zero: the union of curves in a Moser family has always area at least $c$ for some small absolute constant $c > 0$. We strengthen Marstrand's result by showing that for $p > 3$, the $p$-modulus of a Moser family of curves is at least $c_{p} > 0$.
\end{abstract}

\maketitle

\section{Introduction}

The modulus of a curve family is a fundemental tool in the study of
quasiconformal mappings and in other areas of mapping theory, see
for instance the monograph  by O.~Martio et al.\ \cite{MR2466579}  for an overview. In a metric measure space $(X,\mu)$, the
\emph{$p$-modulus} of a family $\Gamma$ of arcs is the number
\begin{displaymath}
\mathrm{mod}_p(\Gamma) = \inf_{\rho\in \mathrm{adm}(\Gamma)} \int_X \rho^p \;\mathrm{d}\mu,
\end{displaymath}
where $\mathrm{adm}(\Gamma)$ is the collection of $\Gamma$-\emph{admissible} functions, namely
\begin{displaymath}
\mathrm{adm}(\Gamma)=\left\{\rho:X \to [0,\infty]\text{ Borel}:\int_{\gamma} \rho \;\mathrm{d}\mathcal{H}^1\geq 1 \text{ for all locally rectifiable }\gamma\in \Gamma\right\}.
\end{displaymath}
To obtain an upper bound for the modulus of a given family, it is
sufficient to find one appropriate admissible density
$\rho$, but an estimate from below requires a lower bound for the $L^{p}(\mu)$-norms of \emph{all} admissible densities. To find an optimal lower bound, or even to show that the modulus of a curve
family is positive, is therefore often a challenging task. So far, this task has mainly been performed for families of curves which either (i) foliate some domain (in this case a non-vanishing modulus corresponds to a Fubini-type theorem), or (ii) consist of all curves connecting two given continua (the $p$-modulus of such a curve family coincides with the $p$-capacity of the said pair of continua).

In this paper, $\mu$ is Lebesgue measure in $X = \R^{2}$, and we consider certain curve families that are quite far from either type (i) or (ii), namely ones that arise from a classical \emph{curve packing problem}. The prototypical example of a curve packing problem is Kakeya's question from the early 1900's: if a set $K \subset \R^{2}$ contains a translate of every unit line segment in $\R^{2}$, what is the minimal (or infimal) area of $K$? The famous answer, due to Besicovitch \cite{MR1544912}, is "zero", and indeed there are compact \emph{Besicovitch sets} of vanishing measure, which satisfy Kakeya's condition.

For the moment, a curve family $\Gamma$ containing a translate of every unit line segment in $\R^{2}$ will be called a \emph{Kakeya family}. With this terminology, Besicovitch's result can be rephrased by saying that the curves in a Kakeya family $\Gamma$ need not cover a positive area, and in particular $\Gamma$ need not have positive $p$-modulus for any $1\leq  p < \infty$. Indeed, if $K$ is a Besicovitch set, then $\chi_{K} \in \textrm{adm}(\Gamma)$ for a certain Kakeya family $\Gamma$, yet $\|\chi_{K}\|_{L^p} = 0$ for every $1\leq p < \infty$. The same conclusion holds, if the line segments are replaced by $n$-sided polygons, see \cite{MR0264035}, or even circular arcs, see \cite{MR0229779,MR1535168,MR0297953}.

In short, if $\Gamma$ contains a translate of every curve in some rather small collection of initial suspects -- such as line segments, or circular arcs -- there is little hope of positive modulus. So, for positive results, the collection of suspects needs to be enlarged, and a natural candidate for is the collection of \textbf{all} plane curves of length one. Indeed, around 1966, L. Moser \cite{MR664434} proposed\footnote{The book \cite{MR664434} is from 1980 and hence not the original reference for Moser's question, which made its first appearance in an unpublished, mimeographed problem list entitled "Poorly formulated unsolved problems of combinatorial geometry".} the following question: if $\Gamma$ is a family of curves containing an isometric copy of every plane curve of length one, then what is the minimal area covered by the curves in $\Gamma$?  The question, known as "Moser's worm problem", has attracted considerable interest in computational geometry. As far as we know, the best upper bound known to date is due to Norwood and Poole \cite{MR1961007}, showing that a \emph{Moser family} of curves need not cover an area larger than $\approx 0.260437$ in general. We refer to the monograph by P.~Brass et al.\ \cite{MR2163782}, Section 11.4, for a bibliographical overview. From our point of view, however, more interesting is a theorem of J. M. Marstrand \cite{MR557120} from 1979, which states that the answer to Moser's question is not zero: the curves in a Moser family $\Gamma$ always cover a positive area, and a quantitative (if very small) lower bound for the measure can be extracted from Marstrand's argument.

Encouraged by Marstrand's result, one could hope that a Moser family of curves has positive $p$-modulus for some $1\leq  p < \infty$. This cannot happen for $1\leq  p \leq 2$, however: all the curves in a Moser family can contain the origin, and even the family of \textbf{all} curves containing the origin has vanishing $2$-modulus, see Corollary 7.20 in Heinonen's book \cite{MR1800917}. Our main result states that the $p$-modulus is non-vanishing for all $p > 3$; the cases $2 < p \leq 3$ remain open.

\begin{thm}\label{main} Let $\Gamma$ be a family of curves which contains an isometric copy of every set of the form
\begin{displaymath} G_{f} := \{(x,f(x)) : x \in [0,1]\}, \end{displaymath}
where $f \colon [0,1] \to [0,1]$ is $1$-Lipschitz. Then $\mathrm{mod}_{p}(\Gamma) \geq c > 0$ for every $p > 3$, where $c > 0$ is a constant depending only on $p$.
\end{thm}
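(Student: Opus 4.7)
\emph{Proof plan.}  The plan is to argue by duality.  Rather than lower-bounding $\|\rho\|_{L^{p}}$ for every admissible $\rho$ directly, I would construct, for each Moser family $\Gamma$, a nonnegative measure $\mu$ on $\Gamma$ whose \emph{occupation measure} on $\R^{2}$,
$$\bar\mu(E) := \int_{\Gamma} \mathcal{H}^{1}(\gamma \cap E) \, d\mu(\gamma),$$
satisfies $\mu(\Gamma) \geq 1$ and $\|\bar\mu\|_{L^{p'}(\R^{2})} \leq C_{p}$, where $p' = p/(p-1)$.  Given such a $\mu$, Fubini and H\"older will yield, for any $\rho \in \mathrm{adm}(\Gamma)$,
$$1 \leq \int_{\Gamma} \int_{\gamma} \rho \, d\mathcal{H}^{1} \, d\mu(\gamma) = \int_{\R^{2}} \rho \, d\bar\mu \leq \|\rho\|_{L^{p}} \|\bar\mu\|_{L^{p'}},$$
so that $\mathrm{mod}_{p}(\Gamma) \geq C_{p}^{-p} > 0$.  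The challenge is that the isometries positioning the curves $\gamma_{f} \in \Gamma$ are completely adversarial, so $\mu$ cannot be written down canonically.

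To construct $\mu$, I would exploit the combinatorial richness of $1$-Lipschitz graphs by working, for a small parameter $\delta = 1/N$, with the $\sim 2^{N}$ piecewise-linear ``$\pm 1$ random walk'' Lipschitz functions with slopes $\pm 1$ on dyadic $\delta$-subintervals.  Each such $f$ comes with $\gamma_{f} \in \Gamma$ placed by some adversarial isometry $\Phi_{f}$.  Since each curve has diameter at most $\sqrt{2}$, a pigeonhole on a bounded covering of the reduced isometry group (modulo, say, one reference endpoint of each curve) should produce, for some small reference ball and rotation class, a subfamily of positive proportion whose placements all cluster around a common affine frame.  I would set $\mu$ to be counting measure on this clustered subfamily, normalised to mass $1$.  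Under the clustering, the curves entering $\mu$ are effectively scaled, rotated copies of random-walk trajectories, and averaging their arclength measures yields an occupation density $w := d\bar\mu/dA$ that inherits the regularity of the underlying discrete transition density.  The key quantitative output I would need is a pointwise bound of the type $w(x) \lesssim |x-x_{0}|^{-\alpha_{0}}$ on an explicit region, for some exponent $\alpha_{0} < 4/3$, uniform in $N$ and in the adversary's choices of the $\Phi_{f}$.

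The remaining step is the $L^{p'}$ computation: a radial singularity of the form $|x-x_{0}|^{-\alpha_{0}}$ lies in $L^{p'}(\R^{2})$ exactly when $\alpha_{0} p' < 2$, and with $\alpha_{0} < 4/3$ this is equivalent to $p' < 3/2$, i.e.\ $p > 3$, matching the threshold asserted in the theorem.  The hardest part will be the occupation-density estimate: the introduction's $p = 2$ obstruction (all curves forced through a common point) shows that $w$ must be genuinely singular, so the task is to prove that it is no worse than $|x-x_{0}|^{-\alpha_{0}}$ for a definite $\alpha_{0} < 4/3$ by converting the stochastic rigidity of $1$-Lipschitz graphs---their $\sqrt{\delta}$-scale transverse fluctuation at scale $\delta$ and bounded angular spread---into a quantitative packing bound on how many random-walk Lipschitz graphs can adversarially concentrate near a fixed point.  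This is the step where Marstrand-type covering ideas will have to be combined with genuinely $L^{p}$-flavoured bookkeeping, and where I expect the restriction $p > 3$ to first manifest itself.
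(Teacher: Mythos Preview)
Your dual strategy has a genuine gap at the point you yourself flag as hardest: the occupation-density estimate for the proposed $\mu$ is not merely unproven, it is false.  Take the simplest adversary, who places every one of your $2^{N}$ random-walk graphs by the identity isometry.  Then the pigeonhole is vacuous (all isometries already cluster), and after normalising the common endpoint to the origin your occupation measure $\bar\mu$ is just the averaged arclength of a slope-$\pm 1$ simple random walk with step $1/N$.  Such walks have vertical fluctuation only $O(N^{-1/2})$, so $\bar\mu$ is supported in a tube of width $\sim N^{-1/2}$ around a fixed segment, with density $\sim N^{1/2}$ there; hence $\|\bar\mu\|_{L^{p'}}^{p'}\sim N^{(p'-1)/2}\to\infty$ for every $p'>1$.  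The singularity is along an entire segment, not a single point, so no bound of the form $w(x)\lesssim |x-x_{0}|^{-\alpha_{0}}$ can hold uniformly in $N$.  More generally, for \emph{any} fixed countable family of curves you commit to in advance, the adversary can choose isometries depending on the curves so as to concentrate them in an arbitrarily thin tube; once you have fixed your family, no choice of $\mu$ supported on it gives an $N$-uniform $L^{p'}$ bound.

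The paper avoids this obstruction by reversing the quantifiers.  It does not build one measure $\mu$ that works against every admissible $\rho$; instead it fixes $\rho$ with $\|\rho\|_{L^{p}}^{p}<c$ and produces a \emph{single} Lipschitz graph $G(\omega)$ (drawn from a multi-scale random construction) such that $\int_{\iota(G(\omega))}\rho\,d\mathcal{H}^{1}<1$ for \emph{every} isometry $\iota$.  The engine is a probabilistic estimate of the form
\[
\mathbb{P}\Bigl\{\sup_{\iota}\mathcal{H}^{1}\bigl(G(\omega)\cap\iota(E)\bigr)>|E|^{1/3-\kappa}\Bigr\}\le |E|,
\]
proved by combining Hoeffding's inequality (over $\sim 2^{k/2}$ independent random choices at each generation $k$) with a union bound over a $\delta$-net in the $3$-dimensional isometry group; it is precisely this $3$ that forces the exponent $1/3$ and hence the restriction $p>3$.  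Applying this to the level sets $E_{j}=\{\rho\sim 2^{j}\}$, whose areas decay like $2^{-pj}$, and summing over $j$ yields the contradiction.  If you want to salvage a duality approach, you would need $\mu$ to depend on the adversary's isometries in an essential way---your current construction does not.
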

We recover Marstrand's theorem, or in fact a slightly stronger version:
\begin{cor}\label{mainCor} Let $\delta \in (0,1]$, and associate to every length-$1$ rectifiable curve $\gamma$ in $\R^{2}$ an $\calH^{1}$-measurable subset $E_{\gamma}$ of length at least $\delta$, and an isometry $\iota_{\gamma}$. Then, the union of the sets $\iota_{\gamma}(E_{\gamma})$ has Lebesgue outer measure at least $\gtrsim c_{p}\delta^{p}$ for any $p > 3$. \end{cor}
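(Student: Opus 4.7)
My plan is to derive the corollary as a direct consequence of Theorem \ref{main}, by using the given data $\gamma \mapsto (E_\gamma, \iota_\gamma)$ to build a curve family satisfying the hypotheses of the theorem, and then by using (a constant multiple of) the indicator function of $U := \bigcup_\gamma \iota_\gamma(E_\gamma)$ as an admissible density.

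To produce the family, for each $1$-Lipschitz function $f \colon [0,1] \to [0,1]$ I would observe that $G_f$ is a rectifiable curve of length between $1$ and $\sqrt{2}$, so parametrizing it by arc length and restricting the parameter to $[0,1]$ picks out a length-one subcurve $\gamma_f \subseteq G_f$. The hypothesis of the corollary, applied to this $\gamma_f$, then supplies an isometry $\iota_f := \iota_{\gamma_f}$ of $\R^2$ together with an $\calH^1$-measurable set $E_f := E_{\gamma_f} \subseteq \gamma_f \subseteq G_f$ of $\calH^1$-measure at least $\delta$. Setting $\tilde G_f := \iota_f(G_f)$, the family $\tilde \Gamma := \{\tilde G_f : f \text{ is } 1\text{-Lipschitz}\}$ consists, by construction, of an isometric copy of each $G_f$, so Theorem \ref{main} delivers $\mathrm{mod}_p(\tilde \Gamma) \geq c_p > 0$ for every $p > 3$.

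Next, I would let $V \supseteq U$ be a Borel set whose Lebesgue measure agrees with the Lebesgue outer measure of $U$, and set $\rho := \delta^{-1} \mathbf{1}_V$. For each $\tilde G_f \in \tilde \Gamma$, the set $\iota_f(E_f)$ is contained both in $U \subseteq V$ and in $\tilde G_f$, and has $\calH^1$-measure at least $\delta$ (since $\iota_f$ is an isometry). Therefore
\[ \int_{\tilde G_f} \rho \, d\calH^1 \;\geq\; \delta^{-1} \calH^1(\iota_f(E_f)) \;\geq\; 1, \]
so $\rho$ is admissible for $\tilde \Gamma$. Combining this with the modulus lower bound,
\[ c_p \;\leq\; \mathrm{mod}_p(\tilde \Gamma) \;\leq\; \int_{\R^2} \rho^p \;=\; \delta^{-p} \, |V|, \]
so $|V|$, which is the Lebesgue outer measure of $U$, is at least $c_p \delta^p$.

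I do not foresee a serious obstacle: the corollary is essentially a repackaging of the modulus estimate via the indicator density, once one has the right family to plug into Theorem \ref{main}. The only mild subtlety is that $U$ need not be Borel, since $\gamma \mapsto (E_\gamma, \iota_\gamma)$ is an unstructured choice over uncountably many curves; passing to the Borel hull $V$ handles this and only enlarges the integrand, hence preserves admissibility.
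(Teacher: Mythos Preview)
Your proposal is correct and follows essentially the same route as the paper: pick a length-one subcurve $\gamma_f$ of each $G_f$, use the associated isometry $\iota_{\gamma_f}$ to form a family to which Theorem \ref{main} applies, and test the modulus against $\delta^{-1}$ times an indicator of (a measurable superset of) the union. The only cosmetic difference is that the paper covers the union by squares of total area at most twice the outer measure, whereas you pass to a Borel hull; your version avoids the harmless factor $2$.
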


\subsection{Notational conventions} A closed disc of radius $r > 0$ and centre $x \in \R^{2}$ is denoted by $B(x,r) \subset \R^{2}$. The notation $A \lesssim B$ means that $A \leq CB$ for some absolute constant $C \geq 1$, and the two-sided inequality $A \lesssim B \lesssim A$ is abbreviated to $A \sim B$. Throughout the text, we use the letter $C$ to denote a (large) constant, whose value may change from one occurrence to the next.

The Lebesgue outer measure of an arbitrary set $A \subset \R^{2}$ is denoted by $|A|$, and we often write "area" instead of "Lebesgue measure". One-dimensional Hausdorff measure in $\R^{2}$ is denoted by $\calH^{1}$; for the definition and basic properties of Hausdorff measures, see Mattila's book \cite{MR1333890}.

\subsection{Proof sketch, and the structure of the paper} We close the introduction with a quick overview of the paper. In Section \ref{probabilitySpace}, we define a large family of Lipschitz graphs $G(\omega)$ parametrised by a probability space $\Omega \ni \omega$. In Section \ref{probabilityLemmas}, a sequence of three lemmas establishes that given a fixed small set $E \subset \R^{2}$ of area $|E| \leq \epsilon$, it is highly unlikely that the intersection of $G(\omega)$ with any isometric copy of $E$ should have $\calH^{1}$-measure far larger than $\epsilon^{1/3}$. The exponent $1/3$ is responsible for the restriction $p > 3$ in Theorem \ref{main}. After these preparations, the proof of Theorem \ref{main}, contained in Section \ref{mainProof}, is fairly quick: if $\rho \in \operatorname{adm}(\Gamma)$, we write $\rho \sim \sum 2^{j} \chi_{E_{j}}$, where $E_{j} = \{x : \rho \sim 2^{j}\}$. Assuming that $\|\rho\|_{L^{p}}^{p} < \epsilon$ for some $p > 3$ and small $\epsilon > 0$, the areas $|E_{j}|$, $j \in \N$, decay faster than $2^{-3j}$, and it is unlikely that the intersection of $G(\omega)$ with any isometric copy of any $E_{j}$ has length $\sim 2^{-j}$. Consequently, we find a graph $G(\omega)$ such that $\calH^{1}(G(\omega) \cap \iota(E_{j})) \ll 2^{-j}$ for any isometry $\iota$, and any $j$. This means that the $\calH^{1}$-integral of $\rho$ over any isometric copy of $G(\omega)$ falls short of $1$, and the ensuing contradiction gives a lower bound for $\epsilon$.

The final section of the paper contains the proof of Corollary \ref{mainCor} and some further remarks.

\section{A probability space of Lipschitz graphs}\label{probabilitySpace}

\subsection{Families of parallelograms}\label{parallelogramCollection} Let $(m_{k})_{k \in \N}$ be an non-decreasing sequence of integers such that $m_{0} = 1$; write
\begin{displaymath} n_{k} := \prod_{j \leq k} m_{j}, \end{displaymath}
and assume that $\sum_{k \geq 1} 2^{k}/n_{k} < 1/3$. We consider a
space of random Lipschitz graphs in $[0,1]^{2}$ constructed in the
following way. For each "generation" $k$, we define a (random)
family $\calT_{k}$ of $2^{k}$ increasingly thin and long closed
parallelograms $\calT_{k} := \{T^{k}_{1},\ldots,T^{k}_{2^{k}}\}$
with the following properties:
\begin{itemize}
\item[(i)] The parallelograms in $\calT_{k}$ are all contained in $[0,1]^{2}$.
\item[(ii)] Two sides of the parallelograms are parallel to the $y$-axis; these will be referred to as the "vertical" sides.
\item[(iii)] The base and height of every parallelogram in $\calT_{k}$ are $2^{-k}$ and $1/n_{k}$, respectively, so that $|T| = 2^{-k}/n_{k}$ for every $T \in \calT_{k}$.
\item[(iv)] For $1 \leq j < 2^{k}$, the right vertical side of $T_{j}$ coincides with the left side of $T_{j + 1}$.
\item[(v)] Fix $\lambda =1/2$. Roughly $2^{\lambda k}$ of the parallelograms in $\calT_{k}$ are called \emph{exceptional}, and the rest are \emph{normal}. The collections of exceptional and normal parallelograms are denoted by $\calE_{k}$ and $\calN_{k}$, respectively. The exceptional parallelograms are quite evenly distributed: between every consecutive pair of parallelograms $T^{k}_{i},T^{k}_{j} \in \calE_{k}$, there are roughly $2^{k(1-\lambda)}$ parallelograms in $\calN_{k}$. Moreover, no exceptional parallelogram has common boundary with $[0,1]^{2}$.
\end{itemize}

Heuristically, it follows from (i)--(iv) that the union of the parallelograms in $\calT_{k}$ roughly forms the $(1/n_{k})$-neighbourhood of the graph of a continuous function $f_{\calT_{k}} \colon [0,1] \to [0,1]$. It will later be shown that all functions $f_{\calT_{k}}$ obtained this way are Lipschitz.

To begin the construction, let $\calT_{0} := [0,1]^{2}$. We declare the only element of $\calT_{0}$ normal, so there are no exceptional parallelograms at the $0^{th}$ level. Then, assume that $\calT_{k}$, $\calE_{k}$ and $\calN_{k}$ have already been defined for some $k \geq 0$. To define the family $\calT_{k + 1}$, consider any maximal "string" of consecutive normal parallelograms (that is, a maximal collection of normal parallelograms with the property that the union is connected). By (v), this collection consists of roughly $2^{(1-\lambda)k}$ parallelograms with base $2^{-k}$ and height $1/n_{k}$. Subdivide each of them to a "pile" of $m_{k}$ parallelograms with base $2^{-k}$ and height $(1/n_{k}) \cdot (1/m_{k}) = 1/n_{k + 1}$ in the obvious way, see Figure \ref{fig2}.
\begin{figure}[h!]
\begin{center}
\includegraphics[scale = 0.6]{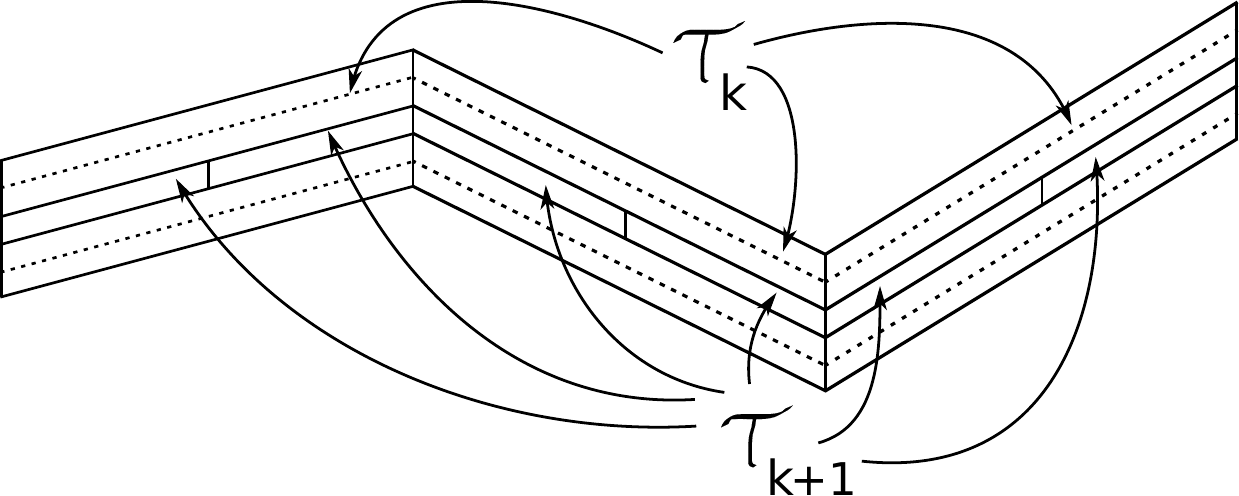}
\caption{A string of three normal parallelograms in $\calT_{k}$,
and how to construct $\calT_{k + 1}$ inside them. Here $m_{k} =
5$.}\label{fig2}
\end{center}
\end{figure}

Then, pick a number $j$ at random in $\{1,\ldots,m_{k}\}$, and, from each "pile", pick the $j^{th}$ parallelogram (so the number $j$ is common for this particular string of normal parallelograms). The new parallelograms obtained in this manner clearly satisfy (i)--(iv), except that the length of their base is twice too long. To remedy this, we simply cut the parallelograms in half with vertical lines.

We repeat the procedure inside every maximal string of normal parallelograms, that is, roughly $2^{\lambda k}$ times. On each occasion, the random integer in $\{1,\ldots,m_{k}\}$ is re-selected independently of previous choices. Now, the construction of $\calT_{k + 1}$ is nearly complete: we only need to specify what to do inside the parallelograms in $\calE_{k}$. Let $T := T_{j}^{k} \in \calE_{k}$. Then, $T$ is adjacent to two normal parallelograms $T'$ and $T''$. The construction of $\calT_{k + 1}$ inside $T'$ and $T''$ is already finished, so, for instance, $T'$ contains a parallelogram in $\calT_{k + 1}$, whose right vertical side $V'$ is contained in the right vertical side of $T'$. The same is true of $T''$, with "right" replaced by "left", and $V'$ replaced by $V''$. Now, there is a unique parallelogram inside $T$ with base $2^{-k}$ and height $1/n_{k + 1}$, which "connects" $V'$ to $V''$, see Figure \ref{fig3}.
\begin{figure}[h!]
\begin{center}
\includegraphics[scale = 1.0]{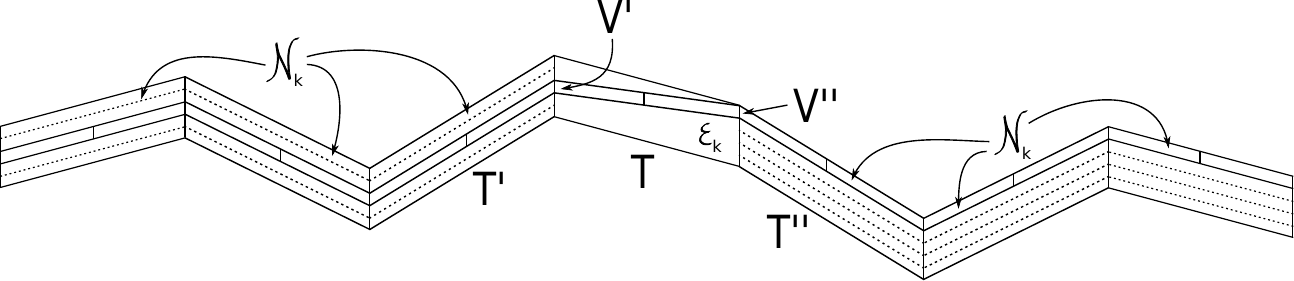}
\caption{The construction of $\calT_{k + 1}$ inside a
parallelogram in $\calE_{k}$.}\label{fig3}
\end{center}
\end{figure}
We split this parallelogram into two halves with a vertical line through the middle, and the ensuing two parallelograms are added to $\calT_{k + 1}$. Repeating this procedure inside each parallelogram in $\calE_{k}$, the construction of $\calT_{k + 1}$ is complete.

Finally, we need to specify $\calE_{k + 1}$ and $\calN_{k + 1}$. To do this, we declare the first $\sim 2^{(1-\lambda)k}$ leftmost parallelograms in $\calT_{k + 1}$ to be normal. The next one is exceptional. Then, again $\sim 2^{(1-\lambda)k}$ normal parallelograms, followed by one exceptional. Continue this until the right vertical side of $[0,1]^{2}$ is reached. If, now, the rightmost parallelogram happens to end up in $\calE_{k + 1}$, or the rightmost string of normal parallelograms contains far fewer than $2^{(1-\lambda)k}$ sets, re-adjust the cardinalities of the previous strings slightly so that the rightmost parallelogram belongs to $\calN_{k + 1}$, and every string contains $\sim 2^{(1-\lambda)k}$ normal parallelograms.

\subsection{A space of Lipschitz graphs} Let $(\calT_{k})_{k \in \N}$ be a sequence of families of parallelograms such that $\calT_{k + 1}$ is obtained from $\calT_{k}$ via the preceding construction; we abbreviate this by saing that $\calT_{k + 1}$ is a \emph{child} of $\calT_{k}$. The unions
\begin{displaymath} G_{\calT_{k}} := \bigcup_{j = 1}^{2^{k}} T^{k}_{j}, \qquad k \geq 0, \end{displaymath}
then form a nested sequence of compact sets inside $[0,1]$, which converge to the graph of a certain $[0,1]$-valued function $f$ defined on the interval $[0,1]$. We now prove that such functions $f$ are Lipschitz.
\begin{lemma}[Lipschitz lemma]\label{l:Lip} If $\sum_{k=1}^{\infty} 2^{k}/n_{k} <1/3 $, then any function of the form $f$ above is $1$-Lipschitz.
\end{lemma}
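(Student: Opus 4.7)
My plan is to bound the slopes of all parallelograms uniformly by $1/3$, and then deduce the Lipschitz estimate for the limit $f$ from the fact that the graph of $f$ lies inside parallelograms of vertical thickness $1/n_k \to 0$.

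First, I would prove by induction on $k$ that each $T^k_j \in \calT_k$ has slope $s^k_j$ satisfying
\[ |s^k_j| \leq \sum_{m=1}^{k-1} \frac{2^m}{n_m} < \frac{1}{3}. \]
The base case is trivial since $[0,1]^2$ has slope $0$. For the induction step, there are two cases. For a normal parent $T \in \calN_k$, both children are obtained from $T$ by a horizontal slice followed by a vertical cut, so they inherit the slope of $T$ exactly. For an exceptional parent $T \in \calE_k$ with bottom-left corner $(a,b)$, both children come from the unique connecting parallelogram joining the sub-segments $V'$ and $V''$ of the vertical sides of $T$. Parametrising $V'$ and $V''$ by the independent indices $j'$ and $j''$ drawn from the two adjacent normal strings (each ranging over $\{1,\ldots,n_{k+1}/n_k\}$), a direct computation of corner coordinates gives that the slope of the connecting parallelogram equals $s_T + 2^k(j''-j')/n_{k+1}$. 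The resulting correction is therefore bounded by $2^k/n_k - 2^k/n_{k+1} < 2^k/n_k$, and summing these corrections along the ancestry chain yields the claimed bound.

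Next, for each $k$ I would introduce the piecewise linear function
\[ f_k(x) := b^k_j + s^k_j (x - a^k_j) + \frac{1}{2 n_k}, \qquad x \in [a^k_j, a^k_j + 2^{-k}], \]
which tracks the midpoint of the vertical slice of $T^k_j$ at $x$. Property (iv) (shared vertical sides) forces $b^k_{j+1} = b^k_j + s^k_j \cdot 2^{-k}$, so the two branch definitions agree at the breakpoints and $f_k$ is continuous; the slope bound then gives that $f_k$ is $\frac{1}{3}$-Lipschitz. Since $(x, f(x)) \in T^k_{j_k(x)}$ and the vertical slice of this parallelogram at $x$ is an interval of length $1/n_k$ with midpoint $f_k(x)$, one has $\|f - f_k\|_\infty \leq 1/(2n_k)$. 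Combining,
\[ |f(x) - f(y)| \leq \frac{1}{3}|x-y| + \frac{1}{n_k}, \]
and letting $k \to \infty$ (the hypothesis forces $n_k \to \infty$) yields $|f(x)-f(y)| \leq \frac{1}{3}|x-y|$, which is in fact strictly stronger than the claimed $1$-Lipschitz property.

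The only mildly technical point is verifying the explicit slope formula for the connecting parallelogram in the exceptional case; everything else is bookkeeping, and the role of the hypothesis $\sum_{k \geq 1} 2^k/n_k < 1/3$ is precisely to keep the cumulative slope correction strictly below $1$.
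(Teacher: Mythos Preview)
Your proof is correct, and the first half (the inductive slope bound, including the explicit formula for the slope of the connecting parallelogram in the exceptional case) is exactly what the paper does.

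The second half differs. The paper fixes two points $(x,f(x))$ and $(x',f(x'))$, chooses the scale $k$ so that $2^{-(k+1)}<|x-x'|\le 2^{-k}$, and then splits into two cases according to whether the points lie in the same parallelogram of $\calT_k$ or in adjacent ones; in each case the slope bound together with the vertical height $1/n_k$ gives an estimate of the form $|f(x)-f(x')|\le (1/3 + C\,2^{k}/n_k)|x-x'|$, and one then uses the summability hypothesis a second time to absorb the error term. Your approach instead builds a continuous piecewise-linear midline function $f_k$ that is $1/3$-Lipschitz by the slope bound, observes $\|f-f_k\|_\infty\le 1/(2n_k)$, and lets $k\to\infty$. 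This is cleaner: it avoids the same-versus-adjacent case split, uses the hypothesis only once (for the slope bound; the limit step only needs $n_k\to\infty$), and yields the sharper conclusion that $f$ is actually $1/3$-Lipschitz rather than merely $1$-Lipschitz.
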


\begin{proof}
Two edges of the parallelograms used in the definition of
$G_{\calT_{k}}$ are formed by vertically translated graphs of
linear functions over segments in the $x$-axis. While the slope of
these graphs can increase from one generation to the next, the
summability condition on $(n_k)_k$, and the fact that the first generation slope is zero, ensures that the corresponding
functions have bounded derivatives. To make this precise, we first
compute the slopes $a_j^{k+1}$ that can occur in generation $k+1$.

By construction, the slopes in the first two generations, namely $a_j^0$ and $a_j^1$, can be assumed to be zero.

Then the two parallelograms $T_{j'}^{k+1}$ which we select inside a
given $T_j^k$, are bounded by linear graphs, all of which have the
same slope $a_{j'}^{k+1}$. In case $T_j^k$ is normal, then
$a_{j'}^{k+1}=a_j^k$.

If $T_j^k$ is exceptional, the slope can increase, but only by a
controlled amount:
\begin{displaymath}
a_{j'}^{k+1} = a_j^k + \frac{m-n}{n_{k+1} 2^{-k}}
\end{displaymath}
for suitable $m,n \in \{0,\ldots,m_{k+1}-1\}$. Thus,
\begin{displaymath}
\left|\frac{m-n}{n_{k+1} 2^{-k}}\right|\leq
\frac{m_{k+1}-1}{m_{k+1}n_k 2^{-k}}=
\frac{2^k}{n_k}-\frac{2^k}{n_{k+1}},
\end{displaymath}
and hence,
\begin{equation}\label{eq:bound_slope}
\sup_{1\leq j\leq 2^{k+1}}|a_{j'}^{k+1}| \leq \sum_{l=1}^{\infty}
\frac{2^l}{n_l}<1/3.
\end{equation}
Given this information, we proceed to prove the $1$-Lipschitz
continuity of $f$.

Let $(x,y)$ and $(x',y')$ be two elements in the graph of $f$.
 Without loss of generality, we
may assume that $x\neq x'$. Then there exists a unique
non-negative integer $k$ such that
\begin{displaymath}
\frac{1}{2^{k+1}} < |x-x'|\leq \frac{1}{2^k}.
\end{displaymath}
It follows that either there is a parallelogram of $\calT_{k}$
containing both $(x,y)$ and $(x',y')$, or the two points lie in
two adjacent parallelograms belonging to $\calT_{k}$.

Assume first that the two points lie in the same parallelogram
$T_j^k$. Then we have the following estimate:
\begin{align*}
|f(x)-f(x')|\leq |a_j^k||x-x'| + n_k^{-1}&\leq \left(\sup_{1\leq
j\leq 2^k}|a_j^k|+n_k^{-1}|x-x'|^{-1}\right)|x-x'|\\
&\leq \left(\sup_{1\leq j\leq
2^k}|a_j^k|+n_k^{-1}2^{k+1}\right)|x-x'|.
\end{align*}
The assumptions on $(n_k)_k$ together with \eqref{eq:bound_slope}
settle the case for $(x,y)$ and $(x',y')$ lying in the same
parallelogram of generation $k$.

Assume next that the two points lie in adjacent parallelograms $T^{k}_{j}$ and $T^{k}_{j + 1}$. Without
loss of generality, $x<x''<x'$, where $x''$ is the first coordinate of
the common vertical edge of $T^{k}_{j}$ and $T^{k}_{j + 1}$. Then, by the
connectedness of the union $T^{k}_{j} \cup T^{k}_{j + 1}$,
\begin{align*}
|f(x)-f(x')|&\leq \frac{2}{n_k} + |a_j^k||x-x''|+
|a_{j+1}^k||x''-x'|\leq \frac{2}{n_k} + \sup_{1\leq j\leq 2^k}
|a_j^k| |x-x'|,
\end{align*}
and we conclude by the same reasoning as before.
\end{proof}

With the previous lemma in mind, we see that any sequence $(\calT_{k})_{k \in \N}$, where $\calT_{k + 1}$ is a child of $\calT_{k}$, can be identified with a Lipschitz graph in $[0,1]^{2}$. In essence, we wish to define a probability measure on the space of all possible graphs so obtained, but in practice it is slightly easier to work with probabilities in the space of all sequences $(\calT_{k})_{k \in \N}$.

Let $\Omega$ be the space of all such sequences. Then $\Omega$ can be viewed as the set of infinite branches in a tree rooted at $\calT_{0}$. By definition, the only vertex of height $0$ is $\calT_{0}$, and the vertices of height $k + 1$ are obtained by considering each height-$k$ vertex $\calT_{k}$, and adding a vertex for each of its children $\calT_{k + 1}$. As a technical point,
it is conceivable that some fixed collection $\calT_{k}$ corresponds to several vertices in the tree, in case $\calT_{k}$ can be obtained via several different sequences starting from $\calT_{0}$ (this is most likely not possible in practice, but even if it is, nothing changes below).

The space $\Omega$ supports a natural probability measure $\tn$: assume that $k_{0} \geq 0$ is given, and there are $N_{k_{0}} \in\N$ finite sequences of the form $(\calT_{k})_{k = 0}^{k_{0}}$ (where $\calT_{k + 1}$ is a child fo $\calT_{k}$). Then, if $(\calT_{k}')_{k = 0}^{k_{0}}$ is any one of these finite sequences, $\tn$ assigns probability $1/N_{k_{0}}$ to the set of infinite sequences
\begin{displaymath} \{(\calT_{k})_{k \in \N} : \calT_{k} = \calT_{k}' \text{ for all } 0 \leq k \leq k_{0} \}. \end{displaymath}
The numbers $N_{k}$ grow very rapidly: in fact
\begin{displaymath} N_{k + 1} = N_{k} \cdot m_{k}^{C_{k}}, \qquad k \in \N, \end{displaymath}
with $C_{k} \sim 2^{\lambda k}$, but this is rather irrelevant for us.

From now on, we denote the generic element of $\Omega$ by $\omega$. Thus, every $\omega \in \Omega$ is a sequence of the form $(\calT_{k})_{k \in \N}$, and we write $\omega_{k} := \calT_{k}$. If $\calT_{k}'$ is a fixed parallelogram collection of generation $k$, it then makes sense to speak of events of the form $\{\omega \in \Omega : \omega_{k} = \calT_{k}'\}$, and indeed events of this form partition $\Omega$, as $\calT_{k}'$ ranges over all possible parallelogram collections of generation $k$. As discussed earlier, every sequence $\omega \in \Omega$ corresponds to a Lipschitz graph contained in $[0,1]^{2}$, and we denote this graph by $G(\omega)$; thus, if $\omega = (\calT_{k})_{k \in \N}$, then
\begin{displaymath} G(\omega) = \bigcap_{k \in \N} G_{k}(\omega), \end{displaymath}
where $G_{k}(\omega)$ is the "level $k$ approximation" $G_{k}(\omega) := G_{\calT_{k}}$.

\section{Lemmas on intersections}\label{probabilityLemmas}

From now on, a generic isometry in $\R^{2}$ will be denoted by $\iota$. Given a measurable set $E \subset \R^{2}$ with $|E| > 0$, we are interested in bounding the $\tn$-probability that $G(\omega) \cap \iota(E)$ has large $\calH^{1}$-measure for some isometry $\iota$. This will accomplished in Lemma \ref{densityLemmaThree} (far) below. However, to get our hands on $\calH^{1}(G(\omega) \cap \iota(E))$, we first need to study a sequence of intermediate quantities, namely the \emph{densities of $\iota(E)$ inside $G_{k}(\omega)$}, $k \in \N$. More generally, if $A,B \subset \R^{2}$ are two Borel sets with $|B| > 0$, the \emph{density of $A$ inside $B$} is
\begin{displaymath} D_{A}(B) := \frac{|A \cap B|}{|B|}. \end{displaymath}

For the rest of the paper, we fix the sequence $(m_{k})_{k \in \N}$ introduced in the previous section. Any sequence so that
\begin{displaymath} 100 {k^{2} \cdot 2^{k}} \leq n_{k} = \prod_{j \leq k} m_{j} \leq 10000 k^{2} \cdot 2^{k}. \end{displaymath}
will do. In particular, then the graphs we are considering are $1$-Lipschitz by Lemma \ref{l:Lip}.

Before stating the first probabilistic lemma, we formulate a geometric one:
\begin{lemma}[Continuity lemma]\label{continuityLemma}
Let $T$ be a parallelogram in $\mathcal{T}_k$, and $E$ a measurable set in $\mathbb{R}^2$. Then, for all isometries $\iota_{1},\iota_{2}$,
\begin{displaymath}
|T\cap \iota_{1}(E)|-|T\cap \iota_{2}(E)| \lesssim 2^{-k}\|\iota_{1} - \iota_{2}\|,
\end{displaymath}
where $\| \cdot \|$ stands for operator norm $\|L\| := \sup \{ |Lx| : |x| \leq 1\}$.
\end{lemma}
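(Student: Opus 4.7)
The plan is to reduce this intersection difference to a symmetric-difference estimate on the parallelogram $T$ itself. By the change of variables $x = \iota_i^{-1}(y)$ and the measure-preserving property of isometries, $|T \cap \iota_i(E)| = |\iota_i^{-1}(T) \cap E|$. Dropping the intersection with $E$ gives
\[
\bigl||T \cap \iota_1(E)| - |T \cap \iota_2(E)|\bigr| \leq |\iota_1^{-1}(T) \triangle \iota_2^{-1}(T)|.
\]
Pushing forward by $\iota_1$, the right-hand side equals $|T \triangle \sigma(T)|$, where $\sigma := \iota_1 \circ \iota_2^{-1}$ is again a Euclidean isometry. Thus the task becomes bounding the area swept out when $\sigma$ displaces $T$.

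For the geometric step I appeal to a Minkowski tube estimate. Setting $r := \sup_{y \in T} |\sigma(y) - y|$, both $T$ and $\sigma(T)$ lie in each other's $r$-neighborhood, so $T \triangle \sigma(T)$ is contained in the $r$-neighborhood of $\partial T$. Since the parallelogram $T$ has perimeter $2(2^{-k} + n_k^{-1}) \lesssim 2^{-k}$, the tube inequality yields
\[
|T \triangle \sigma(T)| \lesssim 2^{-k} r + r^{2}.
\]
The quadratic term is absorbed by a case split: if $r \leq 2^{-k}$ then $r^{2} \leq 2^{-k} r$; if $r > 2^{-k}$, the trivial bound $|T \triangle \sigma(T)| \leq 2|T| = 2^{1-k}/n_k$ is itself $\lesssim 2^{-k} r$ (using $n_k \gtrsim 2^{k}$, which holds for our choice $n_k \sim k^{2} 2^{k}$). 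In either regime, $|T \triangle \sigma(T)| \lesssim 2^{-k} r$.

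It remains to bound $r$ by $\|\iota_1 - \iota_2\|$. From the identity
\[
\sigma(y) - y = \iota_1(\iota_2^{-1}(y)) - \iota_2(\iota_2^{-1}(y)) = (\iota_1 - \iota_2)(\iota_2^{-1}(y))
\]
together with the affine form $\iota_i(x) = A_i x + b_i$ with $A_i$ orthogonal, one has $|(\iota_1 - \iota_2)(x)| \leq \|A_1 - A_2\|\,|x| + |b_1 - b_2|$, and both the rotational and translational parts are controlled by $\|\iota_1 - \iota_2\|$. I expect this closing step to demand the most care, as a priori the point $\iota_2^{-1}(y)$ need not lie in the unit ball where the operator norm is taken, so $|x|$ could be large. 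The resolution is that exactly when $\iota_2^{-1}(T)$ sits far from the origin, the trivial estimate $2|T|$ from the previous paragraph already delivers the desired inequality, while in the complementary case $|x|$ is bounded by an absolute constant and $r \lesssim \|\iota_1 - \iota_2\|$ follows directly from the affine computation.
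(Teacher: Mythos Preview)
Your overall approach coincides with the paper's: reduce to a symmetric-difference estimate on isometric copies of $T$ and control it by the area of a tube around $\partial T$. The paper invokes Marstrand's Lemma~8 as a black box for the tube bound, while you spell it out directly; your case split $r \le 2^{-k}$ versus $r > 2^{-k}$ to absorb the $r^{2}$ term is correct.

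The gap is in your closing paragraph. You claim that ``when $\iota_2^{-1}(T)$ sits far from the origin, the trivial estimate $2|T|$ already delivers the desired inequality''. It does not: the trivial bound $2|T|\lesssim 2^{-2k}$ is $\lesssim 2^{-k}\|\iota_1-\iota_2\|$ only when $\|\iota_1-\iota_2\|\gtrsim 2^{-k}$, and the location of $\iota_2^{-1}(T)$ tells you nothing about $\|\iota_1-\iota_2\|$. Concretely, let $E$ be the upper half-plane, $\iota_2(x)=x-(R,0)$ and $\iota_1(x)=O_\theta x-(R,0)$ with $\theta=3/R$. Then $\|\iota_1-\iota_2\|\sim 1/R\to 0$ as $R\to\infty$, yet $|T\cap\iota_2(E)|-|T\cap\iota_1(E)|=|T|$ for all large $R$. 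So the lemma, read literally for \emph{all} isometries and \emph{all} measurable $E$, is in fact false, and no dichotomy of this shape can close it.

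In fairness, the paper's own proof glosses over the same point: it bounds $|\iota_1(T)\setminus\iota_2(T)|$ via Marstrand's lemma (legitimate, since $T\subset[0,1]^2$) but then silently uses this for $|\iota_1^{-1}(T)\setminus\iota_2^{-1}(T)|$, which lives near the possibly distant set $\iota_2^{-1}(T)$. The honest fix is the one you gesture at but mis-state: in the only application (inside the Density lemma) one has $\diam E\le 2$, and after harmlessly translating so that $0\in E$, the condition $\iota_i(E)\cap[0,1]^2\ne\emptyset$ forces $\iota_2^{-1}(T)$ into a fixed ball, whence your affine computation gives $r\lesssim\|\iota_1-\iota_2\|$ directly.
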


\begin{proof} Denote the $\delta$-neighbourhood of a set $A \subset \R^{2}$ by $N(A,\delta)$. It follows from the Lipschitz lemma, or rather its proof, that the parallelogram $T$ cannot be too tilted: all of its sides have length $\lesssim 2^{-k}$; in particular $|N(\partial T,\delta)| \lesssim \delta \cdot 2^{-k}$ for $0 < \delta \leq 2^{-k}$.

The lemma is clear for $2^{-k} \leq \|\iota_{1} - \iota_{2}\|$, because $|T| \lesssim 2^{-2k}$. So, we assume that $0 < \|\iota_{1} - \iota_{2}\| \leq 2^{-k}$. Now Lemma 8 in Marstrand's paper \cite{MR557120} says that $|\iota_{1}(T) \setminus \iota_{2}(T)| \leq |N(\partial T,\|\iota_{1} - \iota_{2}\|)| \lesssim 2^{-k}\|\iota_{1} - \iota_{2}\|$. Consequently,
\begin{align*} |T \cap \iota_{1}(E)| - |T \cap \iota_{2}(E)| & = |\iota_{1}^{-1}(T) \cap E| - |\iota_{2}^{-1}(T) \cap E|\\
& \leq |\iota_{1}^{-1}(T) \setminus \iota_{2}^{-1}(T)| + |\iota_{2}^{-1}(T) \cap E| - |\iota_{2}^{-1}(T) \cap E|\\
& \lesssim 2^{-k}\|\iota_{1} - \iota_{2}\|,  \end{align*}
as claimed. \end{proof}

\begin{lemma}[Density lemma]\label{densityLemmaOne} Fix $\epsilon, \kappa > 0$, and let $E$ be a Borel set with $|E| \leq \epsilon$ and $\diam E \leq 2$. Then
\begin{displaymath} \tn\left\{\sup_{k \in \N} \sup_{\iota} D_{\iota(E)}(G_{k}(\omega)) \geq \epsilon^{1/3 - \kappa} \right\} \leq \epsilon^{3}, \end{displaymath}
if $\epsilon > 0$ is small enough, depending only on $\kappa$.
\end{lemma}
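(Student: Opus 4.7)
The plan is to discretize the supremum over isometries via the continuity lemma, obtain strong concentration for the intersection at each fixed generation $k$ and discretized $\iota$ by exploiting the conditional independence of the random choices across distinct normal strings of $\calT_{k-1}$, and conclude via a union bound. For the discretization: since $|G_{k}(\omega)|=1/n_{k}$, we have $D_{\iota(E)}(G_{k}(\omega))=n_{k}|\iota(E)\cap G_{k}(\omega)|$, and summing the continuity lemma over the $2^{k}$ parallelograms of $\calT_{k}$ yields
\[\bigl|\,|\iota_{1}(E)\cap G_{k}(\omega)|-|\iota_{2}(E)\cap G_{k}(\omega)|\,\bigr|\lesssim \|\iota_{1}-\iota_{2}\|,\]
so the density is Lipschitz in $\iota$ with constant $\lesssim n_{k}$. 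Only isometries placing $\iota(E)$ near $[0,1]^{2}$ matter, and since $\diam E\leq 2$ these form a compact $3$-parameter family, so a $(c\epsilon^{1/3-\kappa}/n_{k})$-net has cardinality $N_{k}\lesssim (n_{k}/\epsilon^{1/3-\kappa})^{O(1)}$. Moreover, the trivial bound $D_{\iota(E)}(G_{k}(\omega))\leq n_{k}\epsilon$ makes the bad event at level $k$ empty unless $n_{k}\geq \epsilon^{-(2/3+\kappa)}$, so only $k\geq k_{0}(\epsilon)\sim\log(1/\epsilon)$ contribute.

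For the concentration: fix $\iota$ from the net and $k\geq k_{0}$, and condition on $\omega_{k-1}$. The construction of $\omega_{k}$ chooses, independently for each maximal string $S$ of normal parallelograms in $\calT_{k-1}$, a uniform random $j_{S}\in\{1,\ldots,m_{k}\}$ selecting a sub-strip $S_{j_{S}}\subset S$ of height $1/n_{k}$; the level-$k$ cells inside level-$(k-1)$ exceptional parallelograms are determined by neighbour choices, but their combined area is $\lesssim 2^{-\lambda(k-1)}/n_{k}$, contributing at most $2^{-\lambda(k-1)}\ll \epsilon^{1/3-\kappa}$ to the density for $k\geq k_{0}$ and $\lambda=1/2$, and so may be absorbed into the deterministic part. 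Writing the main contribution as $\sum_{S}Y_{S}$ with $Y_{S}:=|\iota(E)\cap S_{j_{S}}|$, the summands are conditionally independent, with $\E[Y_{S}\mid\omega_{k-1}]=|\iota(E)\cap S|/m_{k}$ (so $\sum_{S}\E[Y_{S}\mid\omega_{k-1}]\leq \epsilon/m_{k}$) and uniform range bound $Y_{S}\leq |S|/m_{k}\lesssim 2^{-\lambda(k-1)}/n_{k}=:M_{k}$. Bernstein's inequality with $t:=\epsilon^{1/3-\kappa}/(3n_{k})$, variance $V\lesssim M_{k}\epsilon/m_{k}$, and range $M_{k}$ then yields
\[\tn\bigl\{\textstyle\sum_{S}(Y_{S}-\E[Y_{S}\mid\omega_{k-1}])\geq t\,\big|\,\omega_{k-1}\bigr\}\leq \exp\bigl(-c\min(t^{2}/V,\,t/M_{k})\bigr).\]

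Plugging in $n_{k}\sim k^{2}2^{k}$, $m_{k}\sim 2$, $\lambda=1/2$, and $n_{k}\geq \epsilon^{-(2/3+\kappa)}$, one checks that both $t^{2}/V$ and $t/M_{k}$ exceed $c_{\kappa}|\log\epsilon|$ by a growing factor in $k$, so the conditional tail is $\ll \epsilon^{3}/(k^{2}N_{k})$ once $\epsilon$ is small enough depending on $\kappa$; integrating over $\omega_{k-1}$ and union-bounding over the net and $k\geq k_{0}$ delivers the claim. The crux is quantitative: the target threshold $\epsilon^{1/3-\kappa}$ is only marginally above the natural mean density $\sim\epsilon$, while the isometry net carries a polynomial-in-$n_{k}$ multiplicity, so Chebyshev is insufficient and one needs genuine sub-Gaussian (Bernstein-type) tails. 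The exponent $1/3$ is exactly the break-even point at which the variance $V\sim \epsilon M_{k}\sim \epsilon\cdot 2^{-\lambda k}/n_{k}$ becomes comparable with $t^{2}=\epsilon^{2/3-2\kappa}/n_{k}^{2}$, and this is the geometric origin of the restriction $p>3$ in Theorem \ref{main}.
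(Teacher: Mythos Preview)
Your discretization, your use of the trivial bound for small $k$, and your reduction to a concentration estimate for $\sum_{S}Y_{S}$ conditioned on $\omega_{k-1}$ are all fine, and they match the paper's framework closely (the paper uses Hoeffding rather than Bernstein, but either works here). The problem is the sentence ``one checks that both $t^{2}/V$ and $t/M_{k}$ exceed $c_{\kappa}|\log\epsilon|$ by a growing factor in $k$.'' The second claim is true, but the first is not: with $t\sim\epsilon^{1/3-\kappa}/n_{k}$, $M_{k}\sim 2^{-k/2}/n_{k}$, $m_{k}\sim 2$, and your variance bound $V\lesssim M_{k}\epsilon/m_{k}$, one gets
\[
\frac{t^{2}}{V}\;\sim\;\frac{\epsilon^{2/3-2\kappa}/n_{k}^{2}}{\epsilon\cdot 2^{-k/2}/n_{k}}\;=\;\frac{\epsilon^{-1/3-2\kappa}}{n_{k}\,2^{-k/2}}\;\sim\;\frac{\epsilon^{-1/3-2\kappa}}{k^{2}\,2^{k/2}},
\]
which \emph{decays} in $k$. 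Near $k=k_{0}$ it is of order $\epsilon^{-3\kappa/2}/\log(1/\epsilon)$, just barely enough, but for $k$ even moderately larger than $k_{0}$ the Bernstein tail $\exp(-c\,t^{2}/V)$ is essentially $1$ and cannot possibly absorb the net multiplicity $N_{k}\sim 2^{O(k)}$. The union bound over $k$ therefore diverges, and the argument as written does not close.

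The missing idea is an inductive input at level $k-1$. The paper does not bound $\tn\{g_{k}\geq \epsilon^{1/3-\kappa}\}$ directly; it introduces a slowly increasing sequence of thresholds $r_{k}\nearrow r_{\infty}<\epsilon^{1/3-\kappa}$ and bounds the \emph{first-crossing} probabilities $\tn\{g_{k-1}<r_{k-1}\text{ and }g_{k}\geq r_{k}\}$. On that event one may condition on a specific $\calT_{k-1}$ satisfying $\sup_{\iota}D_{\iota(E)}(G_{\calT_{k-1}})<r_{k-1}<\epsilon^{1/3-\kappa}$, which gives, for \emph{every} fixed $\iota$,
\[
\sum_{S\in\calS_{k-1}}|\iota(E)\cap S|\;\leq\;|\iota(E)\cap G_{\calT_{k-1}}|\;<\;\epsilon^{1/3-\kappa}\,|G_{\calT_{k-1}}|\;\sim\;\frac{\epsilon^{1/3-\kappa}}{n_{k-1}}.
\]
This replaces your bound $\sum_{S}|\iota(E)\cap S|\leq\epsilon$ by a quantity that shrinks like $1/n_{k-1}$, and plugging it into the variance estimate makes the concentration exponent grow like $2^{k/2}$ rather than decay; this is exactly what is needed to beat the $2^{O(k)}$ net and sum over $k$. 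In short, conditioning on $\omega_{k-1}$ alone is not enough: you must also carry along the information that the density was already small at generation $k-1$, and the first-crossing decomposition is what lets you do that.
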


\begin{remark} A quick word on the numerology. The threshold $1/3$ has a real meaning: if we could replace it by $1/q$ for some $q < 3$, then we could prove Theorem \ref{main} for $p > q$ instead of $p > 3$. In particular, the lemma ceases to be true for $q < 2$, since Theorem \ref{main} fails for $p = 2$. On the other hand, the exponent $3$ on the right hand side of the inequality is arbitrary; one could replace it by any number $k > 1$ by requiring $\epsilon$ to be even smaller.    \end{remark}

\begin{proof}[Proof of Lemma \ref{densityLemmaOne}]
We may assume that $\kappa \leq 1/3$.

Fix $\omega = (\calT_{k})_{k \in \N}$ and write $G_{k} := G_{k}(\omega)$. We start with the trivial estimate
\begin{equation}\label{form4} D_{\iota(E)}(G_{k}) = \frac{|G_{k} \cap \iota(E)|}{|G_{k}|} \leq \frac{|E|}{|G_{k}|} \leq n_{k}\epsilon \leq 10000k^{2} \cdot 2^{k}\epsilon. \end{equation}
In particular, for any isometry $\iota$, we have $D_{\iota(E)}(G_{k}) < \epsilon^{1/3 - \kappa}/2$ as long as
\begin{equation}\label{eq:condkepsilon} 10000k^{2} \cdot 2^{k}\epsilon < \epsilon^{1/3 - \kappa}/2, \end{equation}
which is true if $2^{k} \leq \epsilon^{-(2 + \kappa)/3}$, and assuming that $\epsilon > 0$ is small enough depending on $\kappa$. Consequently, assuming that $\epsilon^{-(2 + \kappa)/3} = 2^{k_{\epsilon}}$ for some $k_{\epsilon} \in \N$, we have
\begin{equation}\label{form6} \sup_{k \leq k_{\epsilon}} \sup_{\iota} D_{\iota(E)}(G_{k}(\omega)) < \frac{\epsilon^{1/3 - \kappa}}{2}, \quad \omega \in \Omega. \end{equation}
Let $r_{k_{\epsilon}} := \epsilon^{1/3 - \kappa}/2$, and
\begin{displaymath} r_{k} := r_{k - 1} + \frac{\epsilon^{1/3 - \kappa}}{2(k - k_{\epsilon} + 1)^{2}}, \qquad k > k_{\epsilon}. \end{displaymath}
Observe that $r_{k} \nearrow \pi^{2}\epsilon^{1/3 - \kappa}/12 < \epsilon^{1/3 - \kappa}$. So, if $\sup_{k} \sup_{\iota} D_{\iota(E)}(G_{k}(\omega)) \geq \epsilon^{1/3 - \kappa}$, \eqref{form6} implies that there exists $k > k_{\epsilon}$ such that $\sup_{\iota} D_{\iota(E)}(G_{k}(\omega)) \geq r_{k}$. In particular, there exists a smallest $k$ such that this happens. Thus, writing
\begin{displaymath} g_{k}(\omega) := \sup_{\iota} D_{\iota(E)}(G_{k}(\omega)), \end{displaymath}
we have
\begin{equation}\label{form14} \tn\{\sup_{k} g_{k}(\omega) \geq \epsilon^{1/3 - \kappa}\} \leq \sum_{k = k_{\epsilon} + 1}^{\infty} \tn\{g_{k - 1}(\omega) < r_{k - 1} \text{ and } g_{k}(\omega) \geq r_{k}\}. \end{equation}

We now fix $k > k_{\epsilon}$ and estimate the term with index $k$ in \eqref{form14}. The value of the function $g_{k - 1}$ only depends on the collection $\calT_{k - 1}$. So, the event we are interested in, namely $\{g_{k - 1}(\omega) < r_{k - 1} \text{ and } g_{k}(\omega) \geq r_{k}\}$ can be partitioned into events of the form
\begin{displaymath} \{G_{k - 1}(\omega) = G_{\calT_{k - 1}} \text{ and } g_{k}(\omega) \geq r_{k}\}, \end{displaymath}
where $\calT_{k - 1}$ is some fixed collection of parallelograms with the property that
\begin{equation}\label{form7} \sup_{\iota} D_{\iota(E)}(G_{\calT_{k - 1}}) < r_{k - 1}. \end{equation}
Then, we write
\begin{align} \tn & \{G_{k - 1}(\omega) = G_{\calT_{k - 1}} \text{ and } g_{k}(\omega) \geq r_{k}\} \notag\\
&\label{form13} = \tn\{G_{k - 1}(\omega) = G_{\calT_{k - 1}}\} \cdot \tn\{g_{k}(\omega) \geq r_{k} \mid G_{k - 1}(\omega) = G_{\calT_{k - 1}}\}, \end{align}
and focus on estimating the conditional probabilities $\tn\{g_{k}(\omega) \geq r_{k} \mid G_{k - 1}(\omega) = G_{\calT_{k - 1}}\}$.
We fix the collection $\calT_{k - 1}$ satisfying \eqref{form7}, and we abbreviate the conditional probability $\tn\{ \cdot \mid G_{k - 1}(\omega) = G_{\calT_{k - 1}}\}$ to $\tn_{k}\{ \cdot \}$. Then, we also fix an isometry $\iota_{0}$. The first step is to estimate the $\tn_{k}$-probability of the event
\begin{equation}\label{form8} D_{\iota_{0}(E)}(G_{k}(\omega)) \geq r_{k - 1} + \frac{\epsilon^{1/3 - \kappa}}{4(k - k_{\epsilon} + 1)^{2}}. \end{equation}
For notational convenience, we assume $\iota_{0} = \operatorname{Id}$. The $\tn_{k}$-probability of the event in \eqref{form8} is quite tractable: the approximate graph $G_{k}(\omega)$ only depends on the parallelogram collection $\calT_{k}$, which is constructed inside the fixed collection $\calT_{k - 1}$ using the random process described in Section \ref{parallelogramCollection}.

Let $\calN_{k - 1}$ and $\calE_{k - 1}$ be the collections of normal and exceptional parallelograms of $\calT_{k - 1}$, respectively. Recall that $\calE_{k - 1}$ contains $\sim 2^{\lambda k}$ parallelograms, and that the parallelograms in $\calN_{k - 1}$ can be partitioned into $\sim 2^{\lambda k}$ "strings" of parallelograms, each containing $\sim 2^{(1-\lambda)k}$ elements in $\calT_{k - 1}$ (recall that $\lambda = 1/2$). We denote the collection of such strings by $\calS_{k - 1}$, so that every set $S \in \calS_{k - 1}$ is a union of $\sim 2^{(1-\lambda)k}$ consecutive sets in $\calT_{k - 1}$. Now, for any child $\calT_{k}$ of $\calT_{k - 1}$, we have
\begin{displaymath} |G_{\calT_{k}} \cap E| = \sum_{S \in \calS_{k - 1}} |S \cap G_{\calT_{k}} \cap E| + \sum_{T \in \calE_{k - 1}} |T \cap G_{\calT_{k}} \cap E|. \end{displaymath}
For the second sum, we use the trivial estimate $|T \cap G_{\calT_{k}} \cap E| \leq |T| \sim 2^{-2k}/k^{2}$, which gives
\begin{align*} \frac{1}{|G_{\calT_{k}}|} \sum_{T \in \calE_{k - 1}} |T \cap G_{\calT_{k}} \cap E| & \lesssim \frac{2^{k(\lambda - 2)}/k^{2}}{2^{-k}/k^{2}} = 2^{-k/2} = 2^{-k_{\epsilon}/2}2^{(k_{\epsilon} - k)/2}\\
& = \epsilon^{(2 + \kappa)/6}\cdot 2^{(k_{\epsilon} - k)/2} = \epsilon^{7\kappa/6} \cdot \epsilon^{1/3 - \kappa} \cdot 2^{(k_{\epsilon} - k)/2}. \end{align*}
It follows that if $\epsilon > 0$ is small enough, depending on $\kappa$, we have
\begin{displaymath} \frac{1}{|G_{\calT_{k}}|} \sum_{T \in \calE_{k - 1}} |T \cap G_{\calT_{k}} \cap E| < \frac{\epsilon^{1/3 - \kappa}}{8(k - k_{\epsilon} + 1)^{2}}, \quad k > k_{\epsilon}. \end{displaymath}
Thus, if \eqref{form8} holds, we must have
\begin{displaymath} \frac{1}{|G_{\calT_{k}}|} \sum_{S \in \calS_{k - 1}} |S \cap G_{\calT_{k}} \cap E| \geq r_{k - 1} + \frac{\epsilon^{1/3 - \kappa}}{8(k - k_{\epsilon} + 1)^{2}}. \end{displaymath}
Assume for convenience that all the strings in $\calS_{k - 1}$ have the same measure.\footnote{This can be easily arranged for all but one of the strings during the construction, and then the total area of the one remaining "bad" string is so small, at most $\lesssim 2^{-3k/2}/k^{2}$, that it can be added to the exceptional parallelograms in the above estimation.} Then, also the sets $S \cap G_{\calT_{k}}$ have the same measure irregardless of the choice of $\calT_{k}$, and we denote this quantity by $|S \cap G_{\calT_{k}}|$. Now $(\card \calS_{k - 1}) \cdot |S \cap G_{\calT_{k}}| \leq |G_{\calT_{k}}|$, so the previous inequality implies that
\begin{equation}\label{form9} \frac{1}{\card \calS_{k - 1}} \sum_{S \in \calS_{k - 1}} \frac{|S \cap G_{\calT_{k}} \cap E|}{|S \cap G_{\calT_{k}}|} \geq r_{k - 1} + \frac{\epsilon^{1/3 - \kappa}}{8(k - k_{\epsilon} + 1)^{2}}.\end{equation}
The left hand side of \eqref{form9} can be interpreted as the average of the random variables
\begin{displaymath} X_{S} := \frac{|S \cap G_{\calT_{k}} \cap E|}{|S \cap G_{\calT_{k}}|}, \qquad S \in \calS_{k - 1}. \end{displaymath}
The set $E$ and the collection $\calT_{k - 1}$ being fixed, it follows from the construction that the variables $X_{S}$ are independent. They take values in $[0,1]$, and the expectation of $X_{S}$ is
\begin{equation}\label{form10} \E_{k}[X_{S}] = \frac{|E \cap S|}{|S|} =: d_{S}, \quad S \in \calS_{k - 1}. \end{equation}
This follows from the fact that the $m_{k}$ possible sets $S \cap G_{\calT_{k}}$ partition $S \cap E$
(except for zero measure boundaries), and the quantity $S \cap G_{\calT_{k}}$ does not depend on the choice of $\calT_{k}$.

Since we can rather easily estimate the \emph{expected value} of
average of the random variables $X_S$, $S\in \mathcal{S}_{k-1}$,
we wish to ensure that the empirical mean does not deviate too
much from this expected average. Such an estimate is provided by
Hoeffding's inequality, which we apply below after some
preparations. We quickly recall the inequality for the reader's convenience:
\begin{proposition}[Hoeffding's inequality \cite{MR0144363}]
Let $X_1,\ldots, X_n$ be independent random variables such that $a_i \leq  X_i \leq  b_i$ almost surely. Then, for $t > 0$,
\begin{equation}\label{eq:Hoeffding2}
\mathbb{P} ( \overline{X}- \mathbb{E}(\overline{X})\geq t)\leq \exp\left(-\frac{2 n^2 t^2}{\sum_{i=1}^n (b_i-a_i)^2}\right).
\end{equation}
\end{proposition}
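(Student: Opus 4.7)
The plan is to prove \eqref{eq:Hoeffding2} via the standard Cram\'er--Chernoff exponential moment method, combined with Hoeffding's lemma. Let $Y_i := X_i - \mathbb{E}[X_i]$, so that each $Y_i$ is centered and still takes values in an interval of length $b_i - a_i$. Put $S_n := \sum_{i=1}^n Y_i$, so that $\overline{X} - \mathbb{E}[\overline{X}] = S_n/n$ and the event in question is $\{S_n \geq nt\}$.

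For every $s > 0$, Markov's inequality applied to the nonnegative random variable $e^{sS_n}$, together with the independence of the $X_i$, gives
\begin{displaymath}
\mathbb{P}(S_n \geq nt) \leq e^{-snt}\,\mathbb{E}[e^{sS_n}] = e^{-snt}\prod_{i=1}^n \mathbb{E}[e^{sY_i}].
\end{displaymath}
To estimate each factor I would invoke Hoeffding's lemma: for a centered random variable $Y$ with $\alpha \leq Y \leq \beta$, one has $\mathbb{E}[e^{sY}] \leq \exp(s^2(\beta - \alpha)^2/8)$. The standard proof of this lemma dominates $e^{sY}$ on $[\alpha,\beta]$ by the affine interpolant between its endpoint values (using convexity of $y \mapsto e^{sy}$), takes expectations, and then minimizes the logarithm of the resulting upper bound in $s$; this is a short calculus exercise that I would not reproduce in detail.

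Combining the two ingredients yields
\begin{displaymath}
\mathbb{P}(S_n \geq nt) \leq \exp\!\Big(-snt + \tfrac{s^{2}}{8}\sum_{i=1}^n (b_i - a_i)^2\Big),
\end{displaymath}
and optimizing the quadratic in $s > 0$ at $s^{\ast} = 4nt/\sum_i (b_i - a_i)^2$ produces exactly the bound \eqref{eq:Hoeffding2}. The only nontrivial ingredient is Hoeffding's lemma itself; everything else is a one-line application of Markov's inequality together with independence. Since the proposition is stated with a citation to Hoeffding's original paper, in the text it will simply be used as a black box for the estimation of the empirical mean $\overline{X} := \tfrac{1}{\card \calS_{k-1}}\sum_{S} X_{S}$ appearing on the left hand side of \eqref{form9}.
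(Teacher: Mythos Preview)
Your sketch is correct and is precisely the standard Cram\'er--Chernoff argument; the paper itself does not prove this proposition at all but simply quotes it from \cite{MR0144363} and applies it as a black box, exactly as you note in your final sentence. There is nothing to compare against.
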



We will apply the inequality to the random variables $X_{S}$, so we need to find $a_{S} \leq b_{S}$ such that $X_{S} \in [a_{S},b_{S}]$ almost surely. Clearly, $a_{S} = 0$ will do. To find $b_{S}$, we estimate
\begin{displaymath}
X_S = \frac{|S\cap G_{\mathcal{T}_k}\cap E|}{|S \cap G_{\mathcal{T}_k}|} \leq \min \left\{ 1, \frac{|S\cap E|}{|S \cap G_{\mathcal{T}_k}|}\right\}
\end{displaymath}
Since all normal strings have the same area, we find that
\begin{align*}
\mathrm{card} (\mathcal{S}_{k-1}) \cdot |S \cap G_{\mathcal{T}_k}| &= |G_{\mathcal{T}_k}|-\sum_{T \in \mathcal{E}_{k-1}} |T\cap G_{\mathcal{T}_k}|\gtrsim \frac{2^{-k}}{k^2}
\end{align*}
and thus
\begin{displaymath}
 |S \cap G_{\mathcal{T}_k}| \gtrsim  \frac{2^{-k}}{k^2} \frac{1}{\mathrm{card}(\mathcal{S}_{k-1})}.
\end{displaymath}
It follows that there exists a positive and finite constant $C$
such that $X_S$ is bounded from above by
\begin{displaymath}
b_S := \min \left\{1, C|S\cap E| \cdot k^2\cdot
2^{k}\mathrm{card}(\mathcal{S}_{k-1})\right\}.
\end{displaymath}
Therefore, recalling that $a_{S} = 0$, we have
\begin{align*}
\sum_{S\in \mathcal{S}_{k-1}} (b_S-a_S)^2 &= \sum_{S\in
\mathcal{S}_{k-1}} b_S^2\leq \sum_{S\in \mathcal{S}_{k-1}}
b_S \lesssim k^2 2^{k(1 + \lambda)} \sum_{S\in \mathcal{S}_{k-1}} |S\cap E| .
\end{align*}
In order to estimate this expression further from above, our task
is to find a good estimate for the sum $\sum|E\cap S|$. To this
end, recall that we are working with a fixed collection
$\mathcal{T}_{k-1}$ of parallelograms, which we have chosen so
that \eqref{form7} holds. In particular, we have
\begin{equation}\label{eq:parent_graph_nbhd_density}
|E\cap G_{\mathcal{T}_{k-1}}|< \epsilon^{1/3-\kappa}  |
G_{\mathcal{T}_{k-1}}|.
\end{equation}
Hence
\begin{displaymath}
\sum_{S\in \mathcal{S}_{k-1}} |E\cap S|\leq |E\cap
G_{\mathcal{T}_{k-1}}| <  \epsilon^{1/3-\kappa}  |
G_{\mathcal{T}_{k-1}}| \lesssim \epsilon^{1/3-\kappa} 2^{-k}/k^{2}
\end{displaymath}
and therefore
\begin{displaymath}
\sum_{S\in \mathcal{S}_{k-1}} (b_S-a_S)^2  \lesssim k^2 2^{k(1 + \lambda)} \sum_{S\in \mathcal{S}_{k-1}}  |S\cap E| \lesssim
2^{\lambda k} \epsilon^{1/3-\kappa}.
\end{displaymath}

We now apply Hoeffding's inequality \eqref{eq:Hoeffding2}, which gives here for $t>0$ that
\begin{equation}\label{form11} \tn_{k} \left\{ \frac{1}{\card \calS_{k - 1}} \sum_{S \in \calS_{k - 1}} X_{S} - \frac{1}{\card \calS_{k - 1}} \sum_{S \in \calS_{k - 1}} \E_{k}[X_{S}] \geq t  \right\} \leq \exp\left(\frac{-2(\card \calS_{k - 1})^2t^{2}}{C2^{\lambda k} \epsilon^{1/3-\kappa}}\right).\end{equation}

The average of the variables $X_{S}$ over $S \in \calS_{k - 1}$ is precisely the quantity in \eqref{form9} we are interested in. On the other hand, by \eqref{form10}, the average over the expectations $\E[X_{S}]$ equals
\begin{displaymath} \frac{1}{\card \calS_{k - 1}} \sum_{S \in \calS_{k - 1}} \E_{k}[X_{S}] = \frac{1}{|S| \cdot \card \calS_{k - 1}} \sum_{S \in \calS_{k - 1}} |E \cap S| \leq \frac{|G_{\calT_{k - 1}} \cap E|}{|S| \cdot \card \calS_{k - 1}}. \end{displaymath}
The denominator $|\calN_{k - 1}| := |S| \cdot \card \calS_{k - 1}$ equals the total measure of the parallelograms in $\calN_{k - 1}$, and this is nearly as large as $|G_{\calT_{k - 1}}|$. More precisely, the difference $|G_{\calT_{k - 1}}| - |\calN_{k - 1}|$ equals the total measure of the parallelograms in $\calE_{k - 1}$, say $|\calE_{k - 1}|$, which is bounded by $C2^{(\lambda-2)k}/k^{2}$. Now, using \eqref{form7} and the trivial estimate $|\calN_{k - 1}| \geq |G_{\calT_{k - 1}}|/2$, we obtain
\begin{displaymath} \frac{|G_{\calT_{k - 1}} \cap E|}{|\calN_{k - 1}|} - \frac{|G_{\calT_{k - 1}} \cap E|}{|G_{\calT_{k - 1}}|} = \frac{|G_{\calT_{k - 1}} \cap E|(|G_{\calT_{k - 1}}| - |\calN_{k - 1}|)}{|G_{\calT_{k - 1}}||\calN_{k - 1}|} < 2r_{k - 1} \frac{|\calE_{k - 1}|}{|G_{\calT_{k - 1}}|} \leq 2Cr_{k - 1}2^{(\lambda-1)k}.  \end{displaymath}
Consequently, and recalling that $r_{k - 1} < \epsilon^{1/3 - \kappa}$, we have
\begin{displaymath} \frac{1}{\card \calS_{k - 1}} \sum_{S \in \calS_{k - 1}} \E_{k}[X_{S}] \leq \frac{|G_{\calT_{k - 1}} \cap E|}{|G_{\calT_{k - 1}}|} + 2Cr_{k - 1}2^{(\lambda-1)k} \leq r_{k - 1} + 2C\epsilon^{1/3 - \kappa}2^{(\lambda-1)k}. \end{displaymath}
Further, recalling that $k \geq k_{\epsilon}$ and $k_{\epsilon}$ grows as $\epsilon$ diminishes, we can assume that $\epsilon$ is so small that $2C\epsilon^{1/3 - \kappa}2^{(\lambda-1)k} < \epsilon^{1/3 - \kappa}/[16(k - k_{\epsilon} + 1)^{2})]$.

After this preparation, we apply \eqref{form11} with $t = \epsilon^{1/3 - \kappa}/[16(k - k_{\epsilon} + 1)^{2}]$:

\begin{align*} \tn_{k} & \left\{\frac{1}{\card \calS_{k - 1}} \sum_{S \in \calS_{k - 1}} \frac{|S \cap G_{\calT_{k}} \cap E|}{|S \cap G_{\calT_{k}}|} \geq r_{k - 1} + \frac{\epsilon^{1/3 - \kappa}}{8(k - k_{\epsilon} + 1)^{2}} \right\}\\
& \leq \tn_{k} \left\{ \frac{1}{\card \calS_{k - 1}} \sum_{S \in \calS_{k - 1}} X_{S} - \frac{1}{\card \calS_{k - 1}} \sum_{S \in \calS_{k - 1}} \E_{k}[X_{S}] \geq \frac{\epsilon^{1/3 - \kappa}}{16(k - k_{\epsilon} + 1)^{2}}  \right\}\\
& \leq \exp\left(-2
\frac{(\card \mathcal{S}_{k-1} )^2}{C2^{\lambda k}\epsilon^{1/3-\kappa}}\cdot
\left[\frac{\epsilon^{1/3 - \kappa}}{16(k - k_{\epsilon} +
1)^{2}}\right]^{2} \right).
\end{align*}

Here
\begin{align*} \card \calS_{k - 1} \sim 2^{\lambda k} = (2^{k_{\epsilon}})^{\lambda}2^{(k - k_{\epsilon})\lambda} = (\epsilon^{-(2 + \kappa)/3})^{\lambda}
 2^{\lambda(k - k_{\epsilon})} &= \epsilon^{-(2/3)\lambda }\epsilon^{- (\kappa/3)\lambda}2^{\lambda(k - k_{\epsilon})}\\
 &\geq \epsilon^{-(2/3)\lambda}2^{(k-k_\epsilon)\lambda}. \end{align*}

Hence, recalling that $\lambda = 1/2$, we find
\begin{align*}
2
\frac{(\card \mathcal{S}_{k-1} )^2}{C2^{\lambda k}\epsilon^{1/3-\kappa}}\cdot
\left[\frac{\epsilon^{1/3 - \kappa}}{16(k - k_{\epsilon} +
1)^{2}}\right]^{2}
&\gtrsim 2^{k/2}\cdot \epsilon^{-1/3}
2^{(k-k_\epsilon)/2}\cdot \frac{1}{2^{k/2} \epsilon^{1/3-\kappa}}
\frac{\epsilon^{2/3-2\kappa}}{16^2 (k-k_\epsilon+1)^4}\\&=
\epsilon^{-
\kappa/2}2^{(k-k_\epsilon)/4} \left[ \epsilon^{-\kappa/2}2^{(k-k_\epsilon)/4}\frac{2}{16^2(k-k_\epsilon+1)^4} \right]. 
\end{align*}
Now, by assuming that $\epsilon > 0$ is small enough (depending on $\kappa$), the bracketed expression on the right hand side above can be made as large as we wish. This gives
\begin{displaymath} \exp\left(-2
\frac{(\card \mathcal{S}_{k-1} )^2}{C2^{\lambda k}\epsilon^{1/3-\kappa}}\cdot
\left[\frac{\epsilon^{1/3 - \kappa}}{16(k - k_{\epsilon} +
1)^{2}}\right]^{2} \right) \leq
\exp\left(-\epsilon^{-\kappa/2} \cdot 2^{(k - k_{\epsilon})/4}
\right). \end{displaymath} 

Combining the efforts so far, we have managed to show that
\begin{equation}\label{form21} \tn_{k} \left\{ D_{\iota_{0}(E)}(G_{k}(\omega)) \geq r_{k - 1} + \frac{\epsilon^{1/3 - \kappa}}{4(k - k_{\epsilon} + 1)^{2}} \right\} \leq \exp\left(-\epsilon^{-\kappa/2} \cdot 2^{(k - k_{\epsilon})/4} \right), \end{equation}
for any fixed isometry $\iota_{0}$. Next, the Continuity lemma \ref{continuityLemma} will be applied in a fairly standard manner to infer the bound

\begin{equation}\label{form15} \tn_{k} \left\{\sup_{\iota} D_{\iota(E)}(G_{k}(\omega)) \geq r_{k} \right\} \leq \epsilon^{3} \cdot 2^{k_{\epsilon} - k}, \end{equation}
valid for $\epsilon > 0$ small enough ({depending only on} $\kappa$).

 The isometries of $\R^{2}$ have the form $\iota(\cdot) = O(\cdot - x)$, where $O$ is an orthogonal transformation (rotation or reflection), and $x \in \R^{2}$. Since $\diam E \leq 2$, and all the sets $G_{k}(\omega)$ lie inside $[0,1]^{2}$, the vectors $x$ such that
\begin{equation}\label{form19} G_{k}(\omega) \cap \iota(E) = G_{k}(\omega) \cap O(E - x) \neq \emptyset \end{equation}
for some orthogonal transformation $O$ must lie inside some fixed ball $B(z,10)$, $z \in E$. For a suitable parameter $\delta > 0$, we choose a $\delta$-net $\{x_{1},\ldots,x_{n}\}$ inside $B(z,10)$; then $n \sim \delta^{-2}$. We also choose a $\delta$-net of orthogonal transformations $\{O_{1},\ldots,O_{m}\}$: this simply means that $\min\{ \|O - O_{j}\| : 1 \leq j \leq n\} \leq \delta$ for any orthogonal transformation $O$. Such a $\delta$-net can be found with cardinality $m \lesssim \delta^{-1}$. Consequently, the family of isometries $\iota_{ij}(\cdot) := O_{i}(\cdot - x_{j})$ has cardinality $mn \lesssim \delta^{-3}$. Moreover, if $\iota(\cdot) = O(\cdot - x)$ is any isometry with the property \eqref{form19}, then $|x - x_{j}| \leq \delta$ and $\|O - O_{i}\| \leq \delta$ for some $i,j$. It follows that
\begin{equation}\label{form20} \min\{\|\iota - \iota_{ij}\| : 1 \leq i \leq m \text{ and } 1 \leq j \leq n\} \lesssim \delta, \quad \text{whenever } \iota \text{ satisfies \eqref{form19}.} \quad \end{equation}

For any $\omega \in \Omega$, the set $G_{k}(\omega)$ is a union of $2^{k}$ parallelograms in a certain family $\calT_{k}$. Hence, by the Continuity lemma \ref{continuityLemma}, for two isometries $\iota_{1},\iota_{2}$, we have
\begin{align*} D_{\iota_{1}(E)}(G_{k}(\omega)) -D_{\iota_{2}}(G_{k}(\omega)) & = \frac{1}{|G_{k}(\omega)|} \sum_{T \in \calT_{k}} (|T \cap \iota_{1}(E)| - |T \cap \iota_{2}(E)|)\\
& \lesssim n_{k} \sum_{T \in \calT_{k}} 2^{-k}\|\iota_{1} - \iota_{2}\| = n_{k}\|\iota_{1} - \iota_{2}\|. \end{align*}
It follows from this and \eqref{form20} that if
\begin{equation}\label{form12} \max_{i,j} D_{\iota_{ij}(E)}(G_{k}(\omega)) < r_{k - 1} + \frac{\epsilon^{1/3 - \kappa}}{4(k - k_{\epsilon} + 1)^{2}}, \end{equation}
and $n_{k}\delta < \epsilon^{1/3 - \kappa}/[C(k - k_{\epsilon} +
1)^{2}]$ for some suitable constant $C \geq 1$, then in fact
\begin{displaymath} \sup_{\iota} D_{\iota(E)}(G_{k}(\omega))  < r_{k - 1} + \frac{\epsilon^{1/3 - \kappa}}{2(k - k_{\epsilon} + 1)^{2}} = r_{k}. \end{displaymath}
We now let $\delta = \epsilon^{1/3 - \kappa}/[2Cn_{k}(k -
k_{\epsilon} + 1)^{2}] \gtrsim \epsilon^{1/3}/[k^{4} \cdot 2^{k}]$
be small enough for this purpose. Then, recalling that
$2^{k_{\epsilon}} = \epsilon^{-(2 + \kappa)/3} \geq
\epsilon^{-2/3}$, the family of isometries $\iota_{ij}$ has
cardinality at most
\begin{displaymath} mn \lesssim \delta^{-3} \lesssim k^{12} \cdot 2^{3k} \cdot \epsilon^{-1} \lesssim 2^{6k}, \qquad k > k_{\epsilon}, \end{displaymath}
and the probability that \eqref{form12} should fail for even one
of these $\iota_{ij}$ is hence bounded by $\sim 2^{6k}$ times the
bound from \eqref{form21}:
\begin{displaymath} \tn_{k} \left\{ \max_{i,j} D_{\iota_{ij}(E)}(G_{k}(\omega)) \geq r_{k - 1} + \frac{\epsilon^{1/3 - \kappa}}{4(k - k_{\epsilon} + 1)^{2}} \right\} \lesssim 2^{6k} \exp\left(-\epsilon^{-\kappa/2} \cdot 2^{(k - k_{\epsilon})/4} \right). \end{displaymath}
By the discussion around \eqref{form12}, this implies that
\begin{align*} \tn_{k} \left\{ \sup_{\iota} D_{\iota(E)}(G_{k}(\omega)) \geq r_{k} \right\} & \lesssim 2^{6k} \exp\left(-\epsilon^{-\kappa/2} \cdot 2^{(k - k_{\epsilon})/4} \right)\\
& = 2^{6k_{\epsilon}}2^{6(k -
k_{\epsilon})}\exp\left(-\epsilon^{-\kappa/2} \cdot 2^{(k -
k_{\epsilon})/4} \right). \end{align*} Finally, recall once more
that $\epsilon^{-\kappa} = 2^{k_{\epsilon}(3\kappa/(2 +
\kappa))}$, and observe that
\begin{displaymath} \sup_{x \geq 1} C^{R_{1}}x^{R_{2}}\exp(-C^{r_{1}}x^{r_{2}}) \to 0, \quad \text{as } C \to \infty \end{displaymath}
for any (fixed) choices of $R_{1},R_{2} \geq 1$ and $r_{1},r_{2} > 0$. This implies 
\begin{displaymath}
\epsilon^{-3} 2^{k-k_\epsilon} \tn_{k} \left\{\sup_{\iota} D_{\iota(E)}(G_{k}(\omega)) \geq r_{k} \right\}\leq 1,
\end{displaymath}
and thus \eqref{form15}, for small enough $\epsilon > 0$ (depending only on $\kappa$).

Recalling the discussion leading to \eqref{form13}, we have managed to prove that
\begin{displaymath} \tn\{g_{k - 1}(\omega) < r_{k - 1} \text{ and } g_{k}(\omega) \geq r_{k} \} \leq \epsilon^{3} \cdot 2^{k_{\epsilon} - k} \end{displaymath}
for $\epsilon > 0$ small enough, and it then follows from \eqref{form14} that
\begin{displaymath} \tn\{\sup_{k} g_{k}(\omega) \geq \epsilon^{1/3-\kappa}\} \leq \sum_{k = k_{\epsilon} + 1}^{\infty} \epsilon^{3} \cdot 2^{k_{\epsilon} - k} = \epsilon^{3}. \end{displaymath}
This completes the proof of the lemma. \end{proof}

The statement of the next lemma is very similar to the previous one, except that $D_{\iota(E)}(G_{k}(\omega))$ has been replaced by $\calH^{1}(G(\omega) \cap \iota(E))$:
\begin{lemma}[First intersection lemma]\label{densityLemmaTwo} Fix $\epsilon, \kappa > 0$, and let $E \subset \R^{2}$ be a Borel set with $|E| < \epsilon$ and $\diam E \leq 2$. Then,
\begin{displaymath} \tn\{\sup_{\iota} \calH^{1}(G(\omega) \cap \iota(E)) > \epsilon^{1/3 - \kappa} \} \leq \epsilon^{2}, \end{displaymath}
if $\epsilon > 0$ is small enough, depending only on $\kappa$.
\end{lemma}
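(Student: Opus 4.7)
The plan is to deduce this intersection bound from Lemma \ref{densityLemmaOne} via a short geometric thickening argument, applied at a scale $k_{0} = k_{0}(E,\epsilon)$ chosen so that the $(C/2^{k_{0}})$-neighbourhood of $E$ still has measure $\lesssim \epsilon$. The first and crucial step is a purely deterministic covering estimate:
\begin{equation}\label{planform}
\calH^{1}(G(\omega) \cap \iota(E)) \lesssim D_{\iota(E^{(C/2^{k})})}(G_{k}(\omega)), \qquad k \in \N,
\end{equation}
valid for every $\omega \in \Omega$ and every isometry $\iota$, where $E^{(r)}$ denotes the open $r$-neighbourhood of $E$. To prove \eqref{planform}, let $\calT_{k}^{\ast} \subset \calT_{k}$ consist of the parallelograms meeting $\iota(E)$. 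Since $G(\omega)$ is a $1$-Lipschitz graph and each $T \in \calT_{k}$ has $x$-base of length $2^{-k}$,
\begin{displaymath}
\calH^{1}(G(\omega) \cap \iota(E)) \leq \sum_{T \in \calT_{k}^{\ast}} \calH^{1}(G(\omega) \cap T) \lesssim 2^{-k} \cdot \card(\calT_{k}^{\ast}).
\end{displaymath}
By Lemma \ref{l:Lip} we also have $\diam T \lesssim 2^{-k}$, so each $T \in \calT_{k}^{\ast}$ lies in $\iota(E^{(C/2^{k})})$, whence $\card(\calT_{k}^{\ast}) \cdot |T| \leq |G_{k}(\omega) \cap \iota(E^{(C/2^{k})})|$ by (essentially) disjointness of the parallelograms. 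Substituting $|T| = 2^{-k}/n_{k}$ and $|G_{k}(\omega)| = 1/n_{k}$ produces \eqref{planform}.

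Next, I would reduce (by standard approximation) to the case of compact $E$, and then use outer regularity to pick $k_{0}$ so large that $|E^{(C/2^{k_{0}})}| < 2\epsilon$; automatically also $\diam E^{(C/2^{k_{0}})} \leq 3$ for large $k_{0}$. Fix any $0 < \kappa' < \kappa$ and apply Lemma \ref{densityLemmaOne} (whose proof carries over verbatim with diameter bound $3$) to $E' := E^{(C/2^{k_{0}})}$ with parameters $(2\epsilon, \kappa')$. Outside an event of probability at most $(2\epsilon)^{3}$,
\begin{displaymath}
\sup_{k \in \N} \sup_{\iota} D_{\iota(E')}(G_{k}(\omega)) < (2\epsilon)^{1/3 - \kappa'};
\end{displaymath}
specialising to $k = k_{0}$ and combining with \eqref{planform} gives $\sup_{\iota} \calH^{1}(G(\omega) \cap \iota(E)) \lesssim (2\epsilon)^{1/3 - \kappa'}$ off this exceptional event.

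For $\epsilon > 0$ small enough in terms of $\kappa - \kappa' > 0$, the quantity $(2\epsilon)^{1/3-\kappa'}$ is dominated by $\epsilon^{1/3-\kappa}$, and similarly $(2\epsilon)^{3} \leq \epsilon^{2}$, closing the argument. The main difficulty is the geometric comparison \eqref{planform}: $\iota(E)$ itself need not fill any appreciable portion of $G_{k}(\omega)$, so one has to thicken $E$ slightly before invoking area-densities, converting length on $G(\omega)$ to area in $G_{k}(\omega)$ through the Lipschitz structure. The thickening forces the scale $k_{0}$ to depend on $E$, but this is harmless because Lemma \ref{densityLemmaOne} controls densities simultaneously at every scale $k$.
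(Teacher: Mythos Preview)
Your argument is correct. The core idea --- convert the $\calH^{1}$-length on the limiting graph $G(\omega)$ into an area-density on the approximation $G_{k}(\omega)$ and then invoke Lemma~\ref{densityLemmaOne} --- is the same as in the paper, but your implementation differs in an interesting way.

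The paper does not thicken $E$. Instead, it first approximates $E$ from the outside by a finite union $E_{n}$ of dyadic squares of common side-length $\delta_{n}$, with $|E_{n}| < \epsilon$. At scales $k$ with $2^{-k} < \delta_{n}$, each parallelogram $T \in \calT_{k}$ that meets a square $Q_{j(T)} \subset E_{n}$ actually fits inside $Q_{j(T)}$; a pigeonhole over a fixed list of $\sim 100$ translations then produces a single vector $x_{0}$ with $|T \cap (Q_{j(T)} + x_{0})| \geq c|T|$ for a positive fraction of the relevant $T$'s, yielding $D_{E_{n} + x_{0}}(G_{k}(\omega)) \gtrsim \epsilon^{1/3 - \kappa}$. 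Because translation preserves area, Lemma~\ref{densityLemmaOne} is applied directly to a set of measure $< \epsilon$, with no factor-of-$2$ adjustment. The dyadic-square approximation also makes the ``reduction to compact $E$'' step invisible: the sets $E_{n}$ are automatically compact, and the bad event for $E$ is exhausted by the bad events for the $E_{n}$ via monotone convergence of $\calH^{1}$.

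Your thickening route is arguably more direct: the containment $T \subset \iota(E^{(C/2^{k})})$ replaces the translation-pigeonhole device, and the estimate \eqref{planform} is a clean one-line reduction. The price is that you must separately justify the reduction to compact $E$ (so that $|E^{(r)}| \to |E|$ as $r \to 0$) and tolerate a slightly inflated measure $|E'| < 2\epsilon$, both of which are harmless. Your observation that the $E$-dependence of $k_{0}$ is absorbed by the uniformity of Lemma~\ref{densityLemmaOne} over all $k$ is exactly the right point.
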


\begin{proof}

Cover $E$ with a sequence of dyadic squares $Q_{1},Q_{2},\ldots$ such that $\sum |Q_{j}| < \epsilon$. Then, let
\begin{displaymath} E_{n} := \bigcup_{j = 1}^{n} Q_{j}. \end{displaymath}
For a fixed $n \in \N$, the set $E_{n}$ is a finite union of dyadic squares, so it can be covered by a finite union dyadic squares of a fixed side-length $\delta_{n}$ without altering the total measure $|E_{n}| < \epsilon$. We do this, but for convenience we continue to denote the constituent squares of $E_{n}$ by $Q_{1},\ldots,Q_{n}$.

For each $\omega \in \Omega$ such that $\sup_{\iota}
\calH^{1}(G(\omega) \cap \iota(E)) > \epsilon^{1/3 - \kappa}$,
there exists $n(\omega) \in \N$ such that
\begin{displaymath} \sup_{\iota} \calH^{1}(G(\omega) \cap \iota(E_{n})) > \epsilon^{1/3 - \kappa}, \quad n \geq n(\omega). \end{displaymath}
In particular, if the claim of the lemma fails for some small $\epsilon > 0$, there exists an integer $n = n(\epsilon) \in \N$ such that
\begin{equation}\label{form16} \tn\left\{\sup_{\iota} \calH^{1}(G(\omega) \cap \iota(E_{n})) > \epsilon^{1/3 - \kappa}\right\} > \epsilon^{2}. \end{equation}

Recall that $E_{n}$ consists of squares of side-length $\delta_{n}$, and pick $k \in \N$ so large that $2^{-k} < \delta_{n}$. Then, for any $\omega \in \Omega$ in the event displayed in \eqref{form16}, we claim that
\begin{equation}\label{form17} \sup_{\iota} D_{\iota(E_{n})}(G_{k}(\omega)) \geq c\epsilon^{1/3 - \kappa}. \end{equation}
where $c> 0$ is an absolute constant. To see this, first pick an
isometry $\iota$ such that $\calH^{1}(G(\omega) \cap \iota(E_{n}))
> \epsilon^{1/3 - \kappa}$, and assume for convenience that $\iota
= \operatorname{Id}$. Let $\calT_{k}$ be the the collection of
parallelograms constituting $G_{k}(\omega)$, and observe that
\begin{displaymath} \epsilon^{1/3 - \kappa} < \calH^{1}(G(\omega) \cap E_{n}) \lesssim 2^{-k} \card\{T \in \calT_{k} : T \cap E_{n} \neq \emptyset\}, \end{displaymath}
because the base width of each $T$ is $2^{-k}$, and $G(\omega)$ is
a $1$-Lipschitz graph. In other words, at least $\gtrsim
2^{k}\epsilon^{1/3 - \kappa}$ rectangles $T \in \calT_{k}$ meet at
least one square $Q_{j(T)}$, $1 \leq j(T) \leq n$, see Figure
\ref{fig4}.
\begin{figure}[h!]
\begin{center}
\includegraphics[scale = 0.4]{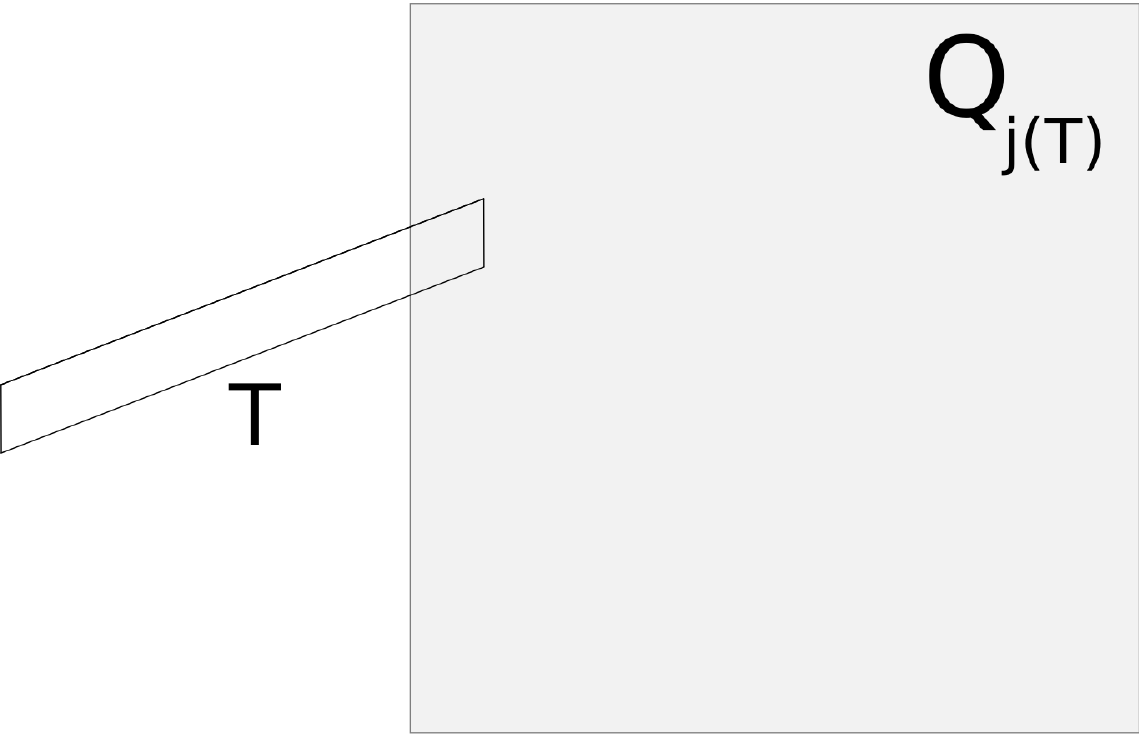}
\caption{A parallelogram $T \in \calT_{k}$ intersecting the square
$Q_{j(T)} \subset E_{n}$.}\label{fig4}
\end{center}
\end{figure}
Now, since $2^{-k} < \delta_{n}$, the parallelogram $T$ would fit entirely inside $Q_{j(T)}$, and it is easy to see the following: there is a fixed family of unit vectors $\{e_{1},\ldots,e_{100}\} \subset S^{1}$ such that
\begin{displaymath} \max_{1 \leq j \leq 100} \frac{|T \cap [Q_{j(T)} + 2^{-k}e_{j}]|}{|T|} \geq c, \end{displaymath}
where $c > 0$ is an absolute constant. It follows that there is a
fixed vector $x_{0} := 2^{-k}e_{j_{0}}$, and $\gtrsim
2^{k}\epsilon^{1/3 - \kappa}$ parallelograms $T \in \calT_{k}$
such that for all these $T$,
\begin{displaymath} \frac{|T \cap [Q_{j(T)} + x_{0}]|}{|T|} \geq c. \end{displaymath}
Consequently, denoting the common size of the rectangles $T \in \calT_{k}$ by $|T|$,
\begin{displaymath} |G_{k}(\omega) \cap (E_{n} + x_{0})| \gtrsim {2^{k}\epsilon^{1/3 - \kappa}} \cdot c|T| = {c\epsilon^{1/3 - \kappa}}|G_{k}(\omega)|, \end{displaymath}
or $D_{E_{n} + x_{0}}(G_{k}(\omega)) \gtrsim c\epsilon^{1/3 -
\kappa}$. This proves \eqref{form17} and shows that
\begin{displaymath} \tn \left\{ \sup_{\iota} \calH^{1}(G(\omega) \cap \iota(E_{n})) > \epsilon^{1/3 - \kappa}\right\} \leq \tn \left\{\sup_{k \in \N} \sup_{\iota} D_{\iota(E_{n})}(G_{k}(\omega)) \geq c\epsilon^{1/3 - \kappa}\right\}. \end{displaymath}
But, writing $c\epsilon^{1/3 - \kappa} = [c^{3/(1 -
3\kappa)}\epsilon]^{1/3 - \kappa}$, Lemma \ref{densityLemmaOne}
says that the latter probability is bounded by $(c^{3/(1 -
3\kappa)}\epsilon)^{3}$ as soon as $\epsilon > 0$ is small enough,
depending only on $\kappa$. For $\epsilon > 0$ small enough, this
contradicts \eqref{form16} and completes the proof. \end{proof}

The next lemma is the same as the previous one without the assumption $\diam E \leq 2$:
\begin{lemma}[Second intersection lemma]\label{densityLemmaThree} Fix $\epsilon,\kappa > 0$, and let $E \subset \R^{2}$ be a Borel set with $|E| < \epsilon$. Then
\begin{displaymath} \tn\left\{\sup_{\iota} \calH^{1}(G(\omega) \cap \iota(E)) > \epsilon^{1/3 - \kappa} \right\} \leq \epsilon, \end{displaymath}
if $\epsilon > 0$ is small enough, depending only on $\kappa$.
\end{lemma}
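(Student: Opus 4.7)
The plan is to reduce the statement to the First Intersection Lemma by decomposing $E$ into countably many Borel pieces of diameter at most $2$, and then exploit the fact that $G(\omega) \subset [0,1]^{2}$ to guarantee that for any isometry $\iota$ only boundedly many of these pieces can contribute to $\calH^{1}(G(\omega) \cap \iota(E))$.

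Tile $\R^{2}$ by the standard closed unit squares $\{Q_{n}\}_{n \in \N}$ and set $E_{n} := E \cap Q_{n}$, so that $\diam E_{n} \leq \sqrt{2} < 2$, the sets $E_{n}$ are essentially disjoint, and $\sum_{n} |E_{n}| = |E| < \epsilon$. Fix $\kappa' := \kappa/2$. For each $n$ with $|E_{n}| > 0$, an application of Lemma~\ref{densityLemmaTwo} with ``$\epsilon$'' replaced by $|E_{n}|$ (legitimate provided the original $\epsilon$ is small enough, since $|E_{n}| < \epsilon$) yields
\begin{displaymath}
\tn(A_{n}) \leq |E_{n}|^{2}, \qquad A_{n} := \bigl\{\omega : \sup_{\iota} \calH^{1}(G(\omega) \cap \iota(E_{n})) > |E_{n}|^{1/3-\kappa'}\bigr\}.
\end{displaymath}
Indices $n$ with $|E_{n}| = 0$ can be handled by applying Lemma~\ref{densityLemmaTwo} along a sequence of auxiliary parameters $\epsilon_{k} = 2^{-k}$ and invoking Borel--Cantelli to produce a $\tn$-null set $N_{n}$ off which $\sup_{\iota} \calH^{1}(G(\omega) \cap \iota(E_{n})) = 0$. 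Setting $\Omega_{\mathrm{bad}} := \bigcup_{n} N_{n} \cup \bigcup_{n : |E_{n}| > 0} A_{n}$, a union bound gives
\begin{displaymath}
\tn(\Omega_{\mathrm{bad}}) \leq \sum_{n} |E_{n}|^{2} \leq \Bigl(\max_{n}|E_{n}|\Bigr) \sum_{n} |E_{n}| \leq \epsilon^{2}.
\end{displaymath}

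The geometric crux is that $G(\omega) \subset [0,1]^{2}$, so $\iota(E_{n})$ can meet $G(\omega)$ only when $Q_{n}$ intersects the unit square $\iota^{-1}([0,1]^{2})$, and a unit square of diameter $\sqrt{2}$ can meet at most an absolute constant $C$ of the axis-aligned unit squares $Q_{n}$. Hence at most $C$ values of $n$ contribute for any fixed $\iota$, and for $\omega \notin \Omega_{\mathrm{bad}}$,
\begin{displaymath}
\calH^{1}(G(\omega) \cap \iota(E)) = \sum_{n} \calH^{1}(G(\omega) \cap \iota(E_{n})) \leq C \sup_{n} |E_{n}|^{1/3-\kappa'} \leq C\epsilon^{1/3-\kappa'}.
\end{displaymath}
For $\epsilon$ sufficiently small (in terms of $C$ and $\kappa$), the right-hand side is at most $\epsilon^{1/3-\kappa}$, so the desired probability is at most $\tn(\Omega_{\mathrm{bad}}) \leq \epsilon^{2} \leq \epsilon$.

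The only real obstacle is the bookkeeping for the (possibly infinitely many) indices with $|E_{n}| = 0$; the Borel--Cantelli route above is clean but requires checking that countably many applications of Lemma~\ref{densityLemmaTwo} and the ensuing countable union of null sets cause no trouble. All the remaining ingredients -- the countable decomposition, the uniform bound on the number of $Q_{n}$'s meeting $\iota^{-1}([0,1]^{2})$, and the choice $\kappa' = \kappa/2$ to absorb the constant $C$ into the exponent for small $\epsilon$ -- are straightforward.
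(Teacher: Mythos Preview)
Your proof is correct and follows essentially the same strategy as the paper's: both decompose $E$ into pieces $E \cap Q$ indexed by unit squares, apply the First Intersection Lemma to each piece, invoke the geometric fact that only boundedly many unit squares can meet $\iota^{-1}([0,1]^{2})$, and sum the resulting probability bounds. The paper organises the summation by grouping the squares into dyadic levels $\calD_{j} = \{Q : |E \cap Q| \sim \epsilon 2^{-j}\}$ (yielding $\sum_{j} 2^{j}(\epsilon 2^{-j})^{2} \sim \epsilon^{2}$) rather than your direct estimate $\sum_{n} |E_{n}|^{2} \leq (\max_{n}|E_{n}|)\sum_{n}|E_{n}| \leq \epsilon^{2}$, and it dismisses the measure-zero pieces with a one-line remark instead of Borel--Cantelli, but these are cosmetic differences.
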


\begin{proof} Let $\calD$ be the collection of dyadic squares of side-length $1$, and write
\begin{displaymath} \calD_{j} := \{Q \in \calD : \epsilon 2^{-j - 1} < |E \cap Q| \leq \epsilon 2^{-j}\}, \qquad j \geq 0. \end{displaymath}
Then
\begin{equation}\label{form18} \card \calD_{j} < 2^{j + 1}, \end{equation}
and $\diam (E \cap Q) \leq 2$ for any $Q \in \calD$. Write
$\calD_{\infty}$ for those $Q \in \calD$ with $|Q \cap E| = 0$.
Recalling that $G(\omega) \subset [0,1]^{2}$ for all $\omega \in
\Omega$ gives the following observation: if $\calH^{1}(G(\omega)
\cap \iota(E)) > \epsilon^{1/3 - \kappa}$ for some $\omega \in
\Omega$ and isometry $\iota$, then $\calH^{1}(G(\omega) \cap
\iota(E \cap Q)) > \epsilon^{1/3 - \kappa}/10$ for some $Q \in
\calD$. Fixing $j \geq 0$, Lemma \ref{densityLemmaTwo}  and
\eqref{form18} imply that
\begin{displaymath} \tn\left\{ \sup_{\iota} \calH^{1}(G(\omega) \cap ([E \cap Q] + x)) > \epsilon^{1/3 - \kappa}/10 \text{ for some } Q \in \calD_{j} \right\} \lesssim 2^{j}(\epsilon 2^{-j})^{2} = \frac{\epsilon^{2}}{2^{j}}\end{displaymath}
for all $\epsilon > 0$ small enough (depending only on $\kappa >
0$); in fact we could even replace $\epsilon^{1/3 - \kappa}$ by
$(\epsilon 2^{-j})^{1/3 - \kappa}$ and still have the same bound.
For $j = \infty$, the probability above is just zero, as it is
clearly zero for each individual $Q \in \calD_{\infty}$.

Combining everything,
\begin{displaymath} \tn\left\{ \sup_{\iota} \calH^{1}(G(\omega) \cap \iota(E \cap Q)) > \epsilon^{1/3 - \kappa}/10 \text{ for some } Q \in \calD \right\} \lesssim \sum_{j} \frac{\epsilon^{2}}{2^{j}} \sim \epsilon^{2}, \end{displaymath}
and so the claim of the lemma certainly holds for $\epsilon > 0$ small enough.
\end{proof}

\section{Proof of the main theorem}\label{mainProof}

We quickly recall the main result, and then prove it.
\begin{thm} Let $\Gamma$ be a family of curves, which contains some isometric copy of every set of the form
\begin{displaymath} G_{f} := \{(x,f(x)) : x \in [0,1]\}, \end{displaymath}
where $f \colon [0,1] \to [0,1]$ is $1$-Lipschitz. Then
$\mathrm{mod}_{p}(\Gamma) \geq c_{p} > 0$ for every $p > 3$.
\end{thm}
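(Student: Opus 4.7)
The plan is to argue by contradiction, along the lines sketched in the introduction. Suppose $\mathrm{mod}_p(\Gamma) < \epsilon$ for some small $\epsilon > 0$ (to be specified below), and fix $\rho \in \mathrm{adm}(\Gamma)$ with $\|\rho\|_{L^p}^p < \epsilon$. Decompose $\rho$ dyadically: for $j \in \Z$, set $E_j := \{x : 2^j \leq \rho(x) < 2^{j+1}\}$, so that $\rho \leq 2 \sum_j 2^j \chi_{E_j}$ pointwise, and Chebyshev gives $|E_j| \leq \epsilon_j := \epsilon \cdot 2^{-jp}$. Each $G(\omega)$ is a $1$-Lipschitz graph over $[0,1]$ by Lemma \ref{l:Lip}, so by hypothesis there is an isometry $\iota_\omega$ with $\iota_\omega(G(\omega)) \in \Gamma$. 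Admissibility, together with a change of variables along $\iota_\omega$, yields
\begin{displaymath} 1 \leq \int_{\iota_\omega(G(\omega))} \rho \, d\mathcal{H}^1 \leq 2 \sum_{j \in \Z} 2^j \mathcal{H}^1(G(\omega) \cap \iota_\omega^{-1}(E_j)). \end{displaymath}
Thus it is enough to produce a single $\omega \in \Omega$ with $2\sum_j 2^j \sup_\iota \mathcal{H}^1(G(\omega) \cap \iota(E_j)) < 1$, the supremum ranging over all isometries; the displayed lower bound then fails and we get a contradiction.

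The main device is the Second Intersection Lemma \ref{densityLemmaThree}, applied once per $j$. Pick $\kappa > 0$ small enough that $\alpha := p(1/3 - \kappa) > 1$, which is possible exactly because $p > 3$. For each $j$ with $\epsilon_j$ small, the lemma supplies
\begin{displaymath} \tn(B_j) \leq \epsilon_j, \quad \text{where } B_j := \left\{\omega : \sup_\iota \mathcal{H}^1(G(\omega) \cap \iota(E_j)) > \epsilon_j^{1/3-\kappa}\right\}. \end{displaymath}
Split the sum over $j$ at an absolute threshold $j_0 \in \Z$ (a small negative integer, fixed below). For $j \geq j_0$, the union bound gives $\tn\bigl(\bigcup_{j \geq j_0} B_j\bigr) \lesssim \sum_{j \geq j_0} \epsilon \cdot 2^{-jp} \lesssim \epsilon$, where the implicit constant depends on $p$ and $j_0$; for $\epsilon$ small this is $< 1/2$. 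On the complementary good event,
\begin{displaymath} \sum_{j \geq j_0} 2^j \sup_\iota \mathcal{H}^1(G(\omega) \cap \iota(E_j)) \leq \epsilon^{1/3-\kappa} \sum_{j \geq j_0} 2^{j(1-\alpha)} \lesssim \epsilon^{1/3-\kappa}, \end{displaymath}
the geometric series converging thanks to $1 - \alpha < 0$.

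For $j < j_0$, no probabilistic input is needed: the crude bound $\mathcal{H}^1(G(\omega) \cap \iota(E_j)) \leq \mathcal{H}^1(G(\omega)) \leq \sqrt{2}$ holds because $G(\omega)$ is a $1$-Lipschitz graph over $[0,1]$, so this tail contributes at most $\sqrt{2} \cdot 2^{j_0}$ to the sum. Fix $j_0$ so that $\sqrt{2} \cdot 2^{j_0} < 1/8$, and then take $\epsilon$ so small that $\tn\bigl(\bigcup_{j \geq j_0} B_j\bigr) < 1/2$ and that the constant times $\epsilon^{1/3 - \kappa}$ appearing above is $< 1/8$. A positive-probability $\omega$ will then satisfy $\sum_j 2^j \sup_\iota \mathcal{H}^1(G(\omega) \cap \iota(E_j)) < 1/4$, and hence $\int_{\iota_\omega(G(\omega))} \rho \, d\mathcal{H}^1 < 1/2 < 1$, contradicting admissibility and forcing $\mathrm{mod}_p(\Gamma) \geq c_p > 0$.

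The substantive work has already been done inside Lemma \ref{densityLemmaThree}; the only genuine balancing at this stage is the choice of $\kappa$ enforcing $p(1/3 - \kappa) > 1$, which is exactly where the hypothesis $p > 3$ enters. The split at $j_0$ is forced on us because the probabilistic estimate $\epsilon_j^{1/3-\kappa}$ grows as $j \to -\infty$, but the crude Lipschitz length bound disposes of that regime without any probabilistic loss.
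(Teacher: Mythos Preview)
Your proof is correct and follows the same overall strategy as the paper: assume $\|\rho\|_{L^p}^p$ is small, decompose $\rho$ into dyadic level sets $E_j$, bound $|E_j|$ by Chebyshev, invoke the Second intersection lemma for each $j$, and use a union bound over $j$ to produce a good $\omega$.

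The only organizational difference is in how the two ends are handled. The paper lumps all small values of $\rho$ into a single set $E_0=\{\rho<1/2\}$ and then reduces to finding an $\omega$ with $\sup_{j\ge 1}\sup_\iota 2^j\mathcal H^1(E_j\cap\iota(G(\omega)))<1$; you instead keep all $j\in\Z$, split at a threshold $j_0$, dispose of $j<j_0$ with the trivial Lipschitz length bound, and control $j\ge j_0$ by summing the geometric series $\sum 2^{j(1-\alpha)}$ with $\alpha=p(1/3-\kappa)>1$. Your direct summation is the cleaner way to close the argument: the paper's pigeonhole reduction to a single bad $j$ is a bit terse (knowing only $\mathcal H^1(\gamma_\omega\cap E_j)<2^{-j}$ for every $j$ does not by itself bound $\sum_j 2^j\mathcal H^1(\gamma_\omega\cap E_j)$), whereas your use of the sharper threshold $\epsilon_j^{1/3-\kappa}\sim 2^{-\alpha j}$ makes the convergence transparent. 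Both routes ultimately rely on exactly the same input, namely Lemma~\ref{densityLemmaThree}, and the hypothesis $p>3$ enters in both cases precisely through the requirement $p(1/3-\kappa)>1$.
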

\begin{proof}
We assume that $\Gamma$ contains an isometric copy of every graph
$G(\omega)$, $\omega \in \Omega$. We then make a counter
assumption: fixing $p > 3$, assume $\mathrm{mod}_p(\Gamma) < c$
for some small constant $c > 0$. Then, there exists a
$\Gamma$-admissible Borel function $\rho \colon \R^{2} \to
[0,\infty]$ such that
\begin{displaymath} \int \rho^{p}(x) \, dx < c. \end{displaymath}
Write
\begin{displaymath} \rho := \sum_{j = 0}^{\infty} \rho \chi_{E_{j}}, \end{displaymath}
where $E_{0} := \{x : 0 \leq \rho(x) < 1/2\}$, and $E_{j} := \{x : 2^{j - 2} \leq \rho(x) < 2^{j - 1}\}$ for $j \geq 1$. For every graph $G(\omega)$, $\omega \in \Omega$, the family $\Gamma$ contains some isometric copy $\gamma_{\omega} := \iota_{\omega}(G(\omega))$. Since $\rho$ is admissible, we have
\begin{displaymath}1 \leq \int_{\gamma_{\omega}} \rho \, d\calH^{1} \leq \calH^{1}(\gamma_{\omega})/2 + \frac{1}{2} \sum_{j = 1}^{\infty} 2^{j} \calH^{1}(\gamma_{\omega} \cap E_{j}) \leq \frac{1}{2} + \frac{1}{2} \sum_{j = 1}^{\infty} 2^{j}\calH^{1}(\gamma_{\omega} \cap E_{j}). \end{displaymath}
In particular, for every $\omega \in \Omega$, we ought to have $\calH^{1}(\gamma_{\omega} \cap E_{j}) \geq 2^{-j}$ for some $j$. So, to produce a contradiction, it suffices to find a graph $G(\omega)$, $\omega \in \Omega$, such that
\begin{equation}\label{form22} \sup_{j \geq 1} \sup_{\iota} 2^{j} \cdot \calH^{1}(E_{j} \cap \iota(G(\omega)) < 1. \end{equation}

To do this, first observe that
\begin{displaymath} |E_{j}| \leq (2^{j - 2})^{-p} \int_{E_{j}} \rho^{p}(x) \, dx \leq c \cdot (2^{j - 2})^{-p}, \quad j \geq 1. \end{displaymath}
For $c$ small enough (depending only on $p$), Lemma \ref{densityLemmaThree} implies that
\begin{displaymath} \tn\left\{ \sup_{\iota} \calH^{1}(E_{j} \cap \iota(G(\omega))) \geq c^{1/p}2^{2 - j} \right\} \leq c \cdot (2^{j - 2})^{-p}. \end{displaymath}
In particular, if $c$ is, in addition, so small that $c^{1/p}2^{2 - j} \leq 2^{-j}$, we have
\begin{displaymath} \tn\left\{\sup_{\iota} \calH^{1}(E_{j} \cap \iota(G(\omega))) \geq 2^{-j} \right\} \leq c \cdot (2^{j - 2})^{-p}. \end{displaymath}
Finally, we choose $c$ so small the upper bounds $c \cdot (2^{j - 2})^{-p}$ sum up to something strictly less than one. This guarantees the existence of a graph $G(\omega)$ such that \eqref{form22} holds, and the ensuing contradiction gives a lower bound for $c$, which only depends on $p$. \end{proof}

\section{Proof of Corollary \ref{mainCor} and Further remarks} We recall the statement of Corollary \ref{mainCor}:
\begin{cor} Let $\delta \in (0,1]$, and associate to every length-$1$ rectifiable curve $\gamma$ in $\R^{2}$ an $\calH^{1}$-measurable
subset $E_{\gamma}$ of length at least $\delta$, and an isometry $\iota_{\gamma}$.
 Then, the union of the sets $\iota_{\gamma}(E_{\gamma})$ has Lebesgue outer measure at least $\gtrsim c_{p}\delta^{p}$ for any $p > 3$. \end{cor}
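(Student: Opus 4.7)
The plan is to reduce the corollary to Theorem \ref{main} by building, out of the union $U := \bigcup_{\gamma}\iota_{\gamma}(E_{\gamma})$ whose outer measure we wish to bound from below, an admissible density for a suitable Moser-type curve family. First I would pass to a Borel envelope $V \supset U$ with $|V| = |U|^{\ast}$ (which exists for Lebesgue outer measure), so that $\chi_{V}/\delta$ is a legitimate Borel function to plug into the definition of modulus.

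The main setup is to choose the family $\Gamma$ on which Theorem \ref{main} will be invoked. The theorem demands isometric copies of every $1$-Lipschitz graph $G_{f}$, $f\colon [0,1] \to [0,1]$; however, such a graph has arclength $L_{f} \in [1,\sqrt{2}]$ rather than exactly $1$, while the corollary only supplies associated data for length-$1$ curves. To bridge the gap, for each $1$-Lipschitz $f$ I would pick $a_{f} \in [1/\sqrt{2},1]$ (by the intermediate value theorem applied to $a \mapsto \mathrm{length}(G_{f}|_{[0,a]})$) such that the sub-arc $\tilde{G}_{f} := \{(x,f(x)) : x \in [0,a_{f}]\}$ has arclength exactly $1$. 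Since $\tilde{G}_{f}$ is then a length-$1$ rectifiable curve, the hypothesis provides a subset $E_{\tilde{G}_{f}} \subset \tilde{G}_{f}$ with $\calH^{1}(E_{\tilde{G}_{f}}) \geq \delta$ and an isometry $\iota_{\tilde{G}_{f}}$. Define
\begin{displaymath}
\Gamma := \{\iota_{\tilde{G}_{f}}(G_{f}) : f \colon [0,1]\to[0,1] \text{ is } 1\text{-Lipschitz}\},
\end{displaymath}
which fulfils the hypothesis of Theorem \ref{main} and hence satisfies $\mathrm{mod}_{p}(\Gamma) \geq c_{p} > 0$ for every $p > 3$.

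It remains to check that $\rho := \chi_{V}/\delta$ is admissible for $\Gamma$. For each $1$-Lipschitz $f$, since $\iota_{\tilde{G}_{f}}(E_{\tilde{G}_{f}}) \subset U \subset V$ and $\iota_{\tilde{G}_{f}}(E_{\tilde{G}_{f}}) \subset \iota_{\tilde{G}_{f}}(G_{f})$, the isometric invariance of $\calH^{1}$ gives
\begin{displaymath}
\int_{\iota_{\tilde{G}_{f}}(G_{f})} \rho \, d\calH^{1} \;\geq\; \frac{\calH^{1}(\iota_{\tilde{G}_{f}}(E_{\tilde{G}_{f}}))}{\delta} \;=\; \frac{\calH^{1}(E_{\tilde{G}_{f}})}{\delta} \;\geq\; 1.
\end{displaymath}
Combining this with Theorem \ref{main} yields
\begin{displaymath}
c_{p} \;\leq\; \mathrm{mod}_{p}(\Gamma) \;\leq\; \int \rho^{p}\, dx \;=\; \frac{|V|}{\delta^{p}} \;=\; \frac{|U|^{\ast}}{\delta^{p}},
\end{displaymath}
so that $|U|^{\ast} \gtrsim c_{p}\delta^{p}$, as required. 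The argument is essentially mechanical once Theorem \ref{main} is in hand; the only conceptual step is the truncation that matches length-$1$ curves with the slightly longer $1$-Lipschitz graphs, and this is handled cleanly by the intermediate value theorem. No genuine obstacle arises.
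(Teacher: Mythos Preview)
Your proposal is correct and follows essentially the same route as the paper's own proof: both arguments truncate each $1$-Lipschitz graph $G_{f}$ to a length-$1$ sub-arc, use the isometry attached to that sub-arc to form a family satisfying the hypothesis of Theorem~\ref{main}, and then test the modulus against $\delta^{-1}$ times the indicator of a Borel superset of the union. The only cosmetic difference is that the paper covers the union by squares of total area at most $2|K|$ while you take a Borel envelope of the exact outer measure; either device works.
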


\begin{proof} Denote the said union by $K$, and assume that $K$ has finite Lebesgue outer measure. Cover $K$ by squares with total area at most $2|K|$, and write $\tilde{K}$ for the union of these squares. Then $\rho := \delta^{-1}\chi_{\tilde{K}} \in \mathrm{adm}(\Gamma)$, where $\Gamma$ is the family of curves $\Gamma := \{\iota_{\gamma}(\gamma) : \calH^{1}(\gamma) = 1\}$. Since the Lipschitz graphs $G_{f}$ in Theorem \ref{main} have length at least one, every one of them contains some sub-curve $\gamma_{f}$ of length exactly one, and we can consider the associated isometries $\iota_{f} := \iota_{\gamma_{f}}$; then $\rho$ is admissible for the family $\{\iota_{f}(G_{f}) : f \colon [0,1] \to [0,1] \text{ is } 1\text{-Lipschitz}\}$, and Theorem \ref{main} with $p > 3$ implies that
\begin{displaymath} |K| \geq \frac{|\tilde{K}|}{2} = \frac{\delta^{p}}{2} \int \rho^{p} \, dx \geq \frac{c_{p}\delta^{p}}{2}. \end{displaymath}
This completes the proof. \end{proof}

\begin{remark} It would be interesting to know $\inf \{p \geq 2 : \mathrm{mod}_{p}(\Gamma) > 0\}$
for every Moser family $\Gamma$, and we strongly suspect that $3$
is not the answer. For instance, using the technique of the paper,
it is not hard to show the following: if $\Gamma$ is a family of
sets containing an isometric copy of every $1$-Ahlfors-David
regular set in $[0,1]^{2}$ (with regularity constants bounded by
$10$, say), then $\Gamma$ has positive $p$-modulus for every $p >
2$. Here modulus is defined in the obvious way, with $\rho \in
\textrm{adm}(\Gamma)$, if $\int_{K} \rho \, d\calH^{1} \geq 1$ for
every set $K \in \Gamma$. The proof is simpler than that of
Theorem \ref{main}, mainly because it is easier to construct
random $1$-Ahlfors-David regular sets than random graphs
(subdivide $[0,1]^{2}$ into four sub-squares and select two at
random; then subdivide the remaining squares into four pieces,
select two at random inside each, and continue \emph{ad
infinitum}). From a technical point of view, the improvement from
$3$ to $2$ is caused by the fact that there is no longer a need
for "exceptional sets", and one can prove an analogue of the
Second intersection lemma with $\epsilon^{1/3 - \kappa}$ replaced
by $\epsilon^{1/2 - \kappa}$.

We conjecture that $\inf \{p \geq 2 : \mathrm{mod}_{p}(\Gamma) > 0\} = 2$ for every Moser family $\Gamma$. \end{remark}

\bibliographystyle{amsplain}
\bibliography{referencesMod}

\def\cprime{$'$}
\providecommand{\bysame}{\leavevmode\hbox to3em{\hrulefill}\thinspace}
\providecommand{\MR}{\relax\ifhmode\unskip\space\fi MR }
\providecommand{\MRhref}[2]{%
  \href{http://www.ams.org/mathscinet-getitem?mr=#1}{#2}
}
\providecommand{\href}[2]{#2}
\begin{thebibliography}{10}

\bibitem{MR1544912}
A.~S. Besicovitch, \emph{On {K}akeya's problem and a similar one}, Math. Z.
  \textbf{27} (1928), no.~1, 312--320. \MR{1544912}

\bibitem{MR0229779}
A.~S. Besicovitch and R.~Rado, \emph{A plane set of measure zero containing
  circumferences of every radius}, J. London Math. Soc. \textbf{43} (1968),
  717--719. \MR{0229779 (37 \#5345)}

\bibitem{MR2163782}
Peter Brass, William Moser, and J{\'a}nos Pach, \emph{Research problems in
  discrete geometry}, Springer, New York, 2005. \MR{2163782 (2006i:52001)}

\bibitem{MR0297953}
Roy~O. Davies, \emph{Another thin set of circles}, J. London Math. Soc. (2)
  \textbf{5} (1972), 191--192. \MR{0297953 (45 \#7005)}

\bibitem{MR1800917}
Juha Heinonen, \emph{Lectures on analysis on metric spaces}, Universitext,
  Springer-Verlag, New York, 2001. \MR{1800917 (2002c:30028)}

\bibitem{MR0144363}
Wassily Hoeffding, \emph{Probability inequalities for sums of bounded random
  variables}, J. Amer. Statist. Assoc. \textbf{58} (1963), 13--30. \MR{0144363}

\bibitem{MR1535168}
J.~R. Kinney, \emph{A {T}hin {S}et of {C}ircles}, Amer. Math. Monthly
  \textbf{75} (1968), no.~10, 1077--1081. \MR{1535168}

\bibitem{MR557120}
J.~M. Marstrand, \emph{Packing smooth curves in {${\bf R}^{q}$}}, Mathematika
  \textbf{26} (1979), no.~1, 1--12. \MR{557120 (81d:52009)}

\bibitem{MR2466579}
Olli Martio, Vladimir Ryazanov, Uri Srebro, and Eduard Yakubov, \emph{Moduli in
  modern mapping theory}, Springer Monographs in Mathematics, Springer, New
  York, 2009. \MR{2466579 (2012g:30004)}

\bibitem{MR1333890}
Pertti Mattila, \emph{Geometry of sets and measures in {E}uclidean spaces},
  Cambridge Studies in Advanced Mathematics, vol.~44, Cambridge University
  Press, Cambridge, 1995, Fractals and rectifiability. \MR{1333890 (96h:28006)}

\bibitem{MR664434}
W.~O.~J. Moser, G.~Blind, V.~Klee, C.~Rousseau, J.~Goodman, B.~Monson,
  J.~Wetzel, L.~M. Kelly, G.~Purdy, and J.~Wilker (eds.), \emph{Problems in
  discrete geometry, 1980}, fifth ed., William Moser, Department of
  Mathematics, McGill University, Montreal, Que., 1980. \MR{664434 (84c:51003)}

\bibitem{MR1961007}
Rick Norwood and George Poole, \emph{An improved upper bound for {L}eo
  {M}oser's worm problem}, Discrete Comput. Geom. \textbf{29} (2003), no.~3,
  409--417. \MR{1961007 (2004a:52033)}

\bibitem{MR0264035}
D.~J. Ward, \emph{A set of plane measure zero containing all finite polygonal
  arcs}, Canad. J. Math. \textbf{22} (1970), 815--821. \MR{0264035 (41 \#8634)}

\end{thebibliography}

\end{document}